\newcommand\hc{\operatorname{hc}}
\newcommand\tc{\operatorname{tc}}
\newcommand\lc{\operatorname{lc}}
\newcommand\pc{\operatorname{pc}}
\newtheorem{theorem}{Theorem}[section] \numberwithin{equation}{section}
\newtheorem{proposition}[theorem]{Proposition}
\newtheorem{exam}{Example}
\renewcommand\emptyset{\varnothing}
\newcommand\commentout[1]{}
\newcommand\Def[1]{{\bf #1}}
\newcommand\vol{\operatorname{vol}}
\newcommand\ehr{\operatorname{ehr}}
\newcommand\aff{\operatorname{aff}} 
\newcommand\thespan{\operatorname{span}} 
\newcommand\ZZ{\mathbb{Z}}
\newcommand\QQ{\mathbb{Q}}
\newcommand\RR{\mathbb{R}}
\newcommand\cE{\mathcal{E}}
\newcommand\cF{\mathcal{F}}
\newcommand\Zono{\mathcal{Z}}
\newcommand\bA{A}
\newcommand\bB{B}
\newcommand\bC{C}
\newcommand\bD{D}
\newcommand\bI{\mathbf{I}}
\newcommand\bU{\mathbf{U}}
\newcommand\bW{\mathbf{W}}
\newcommand\ba{\mathbf{a}}
\newcommand\bb{\mathbf{b}}
\newcommand\bc{\mathbf{c}}
\newcommand\be{\mathsf{e}}
\newcommand\bs{\mathbf{s}}
\newcommand\bt{\mathbf{t}}
\newcommand\bu{\mathbf{u}}
\newcommand\bv{\mathbf{v}}
\newcommand\bw{\mathbf{w}}
\newcommand\bzero{\mathbf{0}}
\newcommand\bone{\mathbf{1}}
\newcommand\blambda{\mathbf{\lambda}}
\newcommand\hobox{
\unitlength .23 mm 
\begin{picture}(10,10)(0,0)
\linethickness{0.2mm}
\put(0,-0.4){\line(0,1){10.83}}
\linethickness{0.2mm}
\put(-0.4,0){\line(1,0){10.83}}
\linethickness{0.2mm}
\multiput(10,0.11)(0,1.885){6}{\line(0,1){0.9}}
\linethickness{0.2mm}
\multiput(0.11,10)(1.885,0){6}{\line(1,0){0.9}}
\end{picture}
\,
}
\begin{document}

\title{The Arithmetic of Coxeter Permutahedra}

\author{
\textsf{Federico Ardila\footnote{\noindent \textsf{San Francisco State University, Universidad de Los Andes; federico@sfsu.edu.}}}
\and
\textsf{Matthias Beck\footnote{\noindent \textsf{San Francisco State University, Freie Universit\"at Berlin; mattbeck@sfsu.edu.}}}
\and
\textsf{Jodi McWhirter\footnote{\noindent \textsf{Washington University in St.~Louis; jodi.mcwhirter@wustl.edu \newline
FA was supported by National Science Foundation grant DMS-1855610 and Simons Fellowship 613384.}}}
}

\maketitle

\begin{abstract}
Ehrhart theory measures a polytope $P$ discretely by counting the lattice points inside its dilates $P, 2P, 3P, \ldots$. We compute the Ehrhart quasipolynomials of the standard Coxeter permutahedra for the classical Coxeter groups, expressing them in terms of the Lambert $W$ function. A central tool is a description of the Ehrhart theory of a rational translate of an integer zonotope.
\end{abstract}


\section{Introduction}

\subsection{Measuring combinatorial polytopes}

Measuring is one of the central questions in mathematics: How do we quantify the size or complexity of a mathematical object? In the theory of polytopes, it is natural to measure a shape by means of its volume or its surface area. 
Computing these quantities for a high-dimensional polytope $P$ is a difficult task
\cite{baranyfuredi, dyerfrieze}, and one approach has been to discretize the question. One
places the polytope $P$ on a grid and asks: How many grid points  does $P$ contain?
How many grid points do its dilates $2P, 3P, 4P, \ldots$ contain? This approach is illustrated in Figure \ref{root permutahedra} for four polygons. 

Ehrhart
\cite{ehrhartpolynomial} showed that when the polytope $P$ has integer (or rational) vertices,
then there is a polynomial (or quasipolynomial) $\ehr_P(x)$ such that the dilate $tP$
contains exactly $\ehr_P(t)$ grid points for any positive integer~$t$. He also showed that
the leading coefficient of $\ehr_P(x)$ equals the (suitably normalized) volume of $P$, and
the second leading coefficient equals half of the (suitably normalized) surface area.
Therefore the \emph{Ehrhart (quasi)polynomial} (which we will define in detail in Section~\ref{sec:Ehrhart} below) is a more precise measure of size than these two
quantities. Ehrhart theory is devoted to measuring polytopes in this way, computing
continuous quantities discretely (see, e.g.,~\cite{cdd}).

\begin{figure}[h]
	\centering
	\includegraphics[height=1.2in]{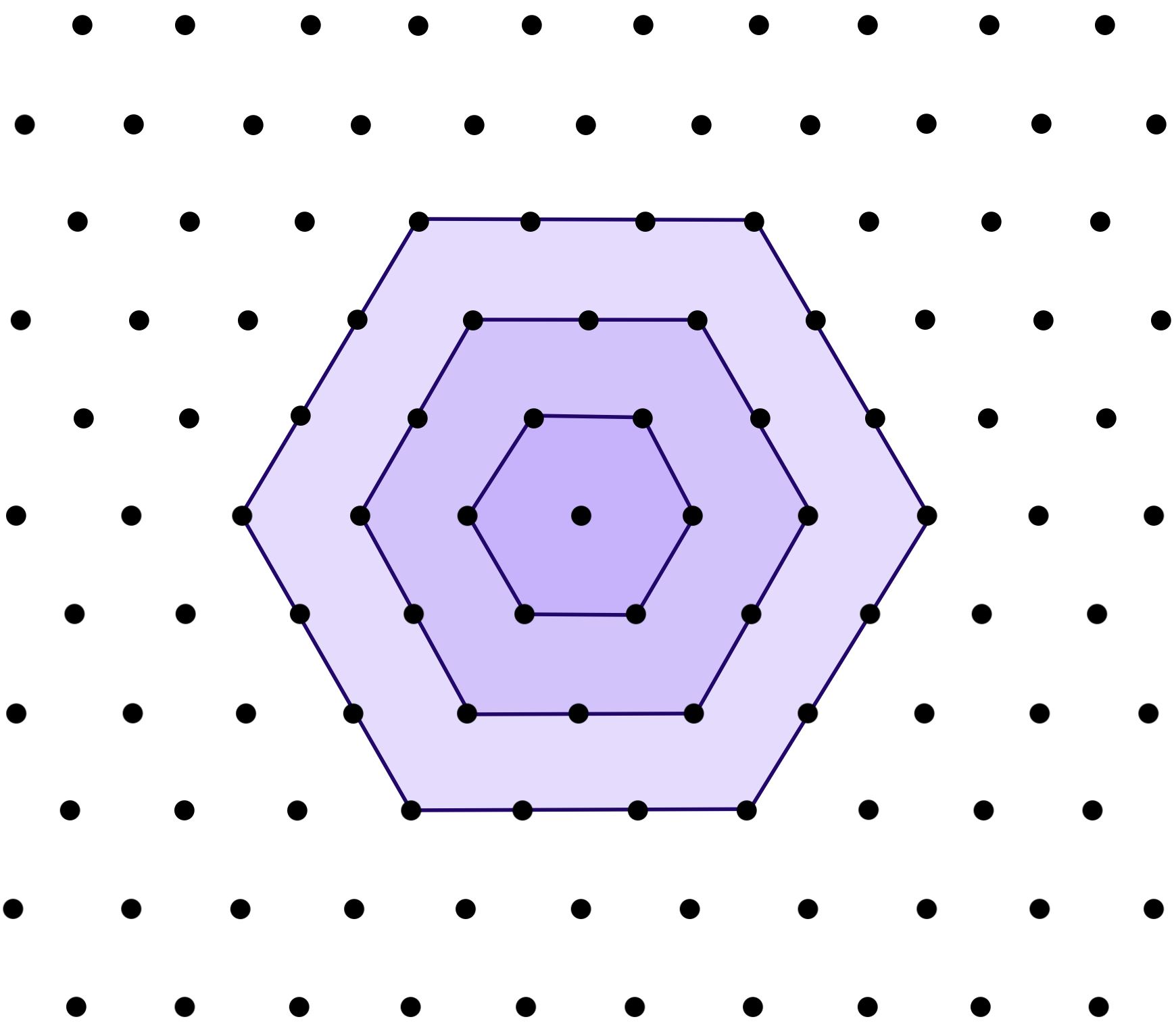}
	\hspace{.3in}
	\includegraphics[height=1.2in]{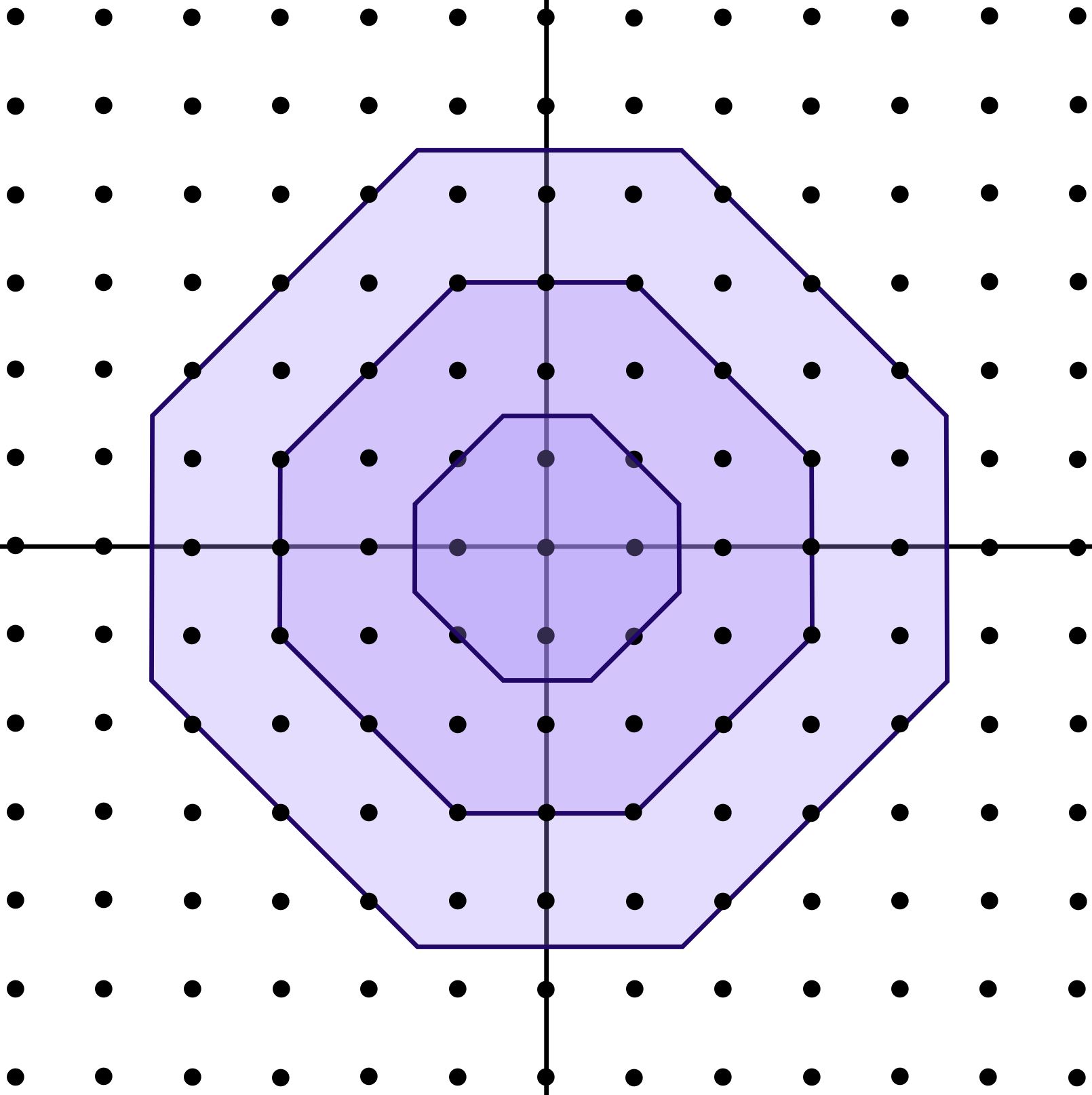}
	\hspace{.3in}
	\includegraphics[height=1.2in]{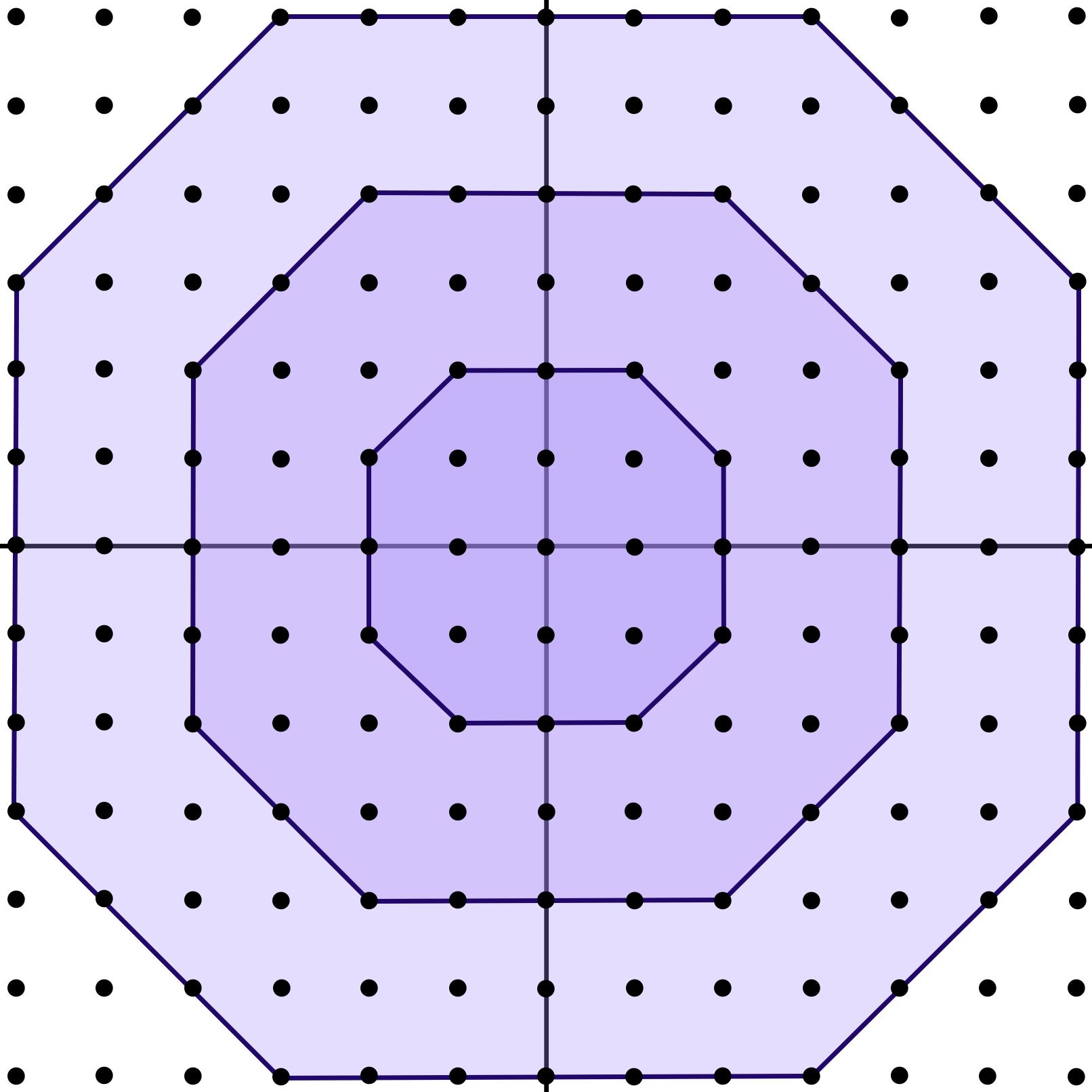}
	\hspace{.3in}
	\includegraphics[height=1.2in]{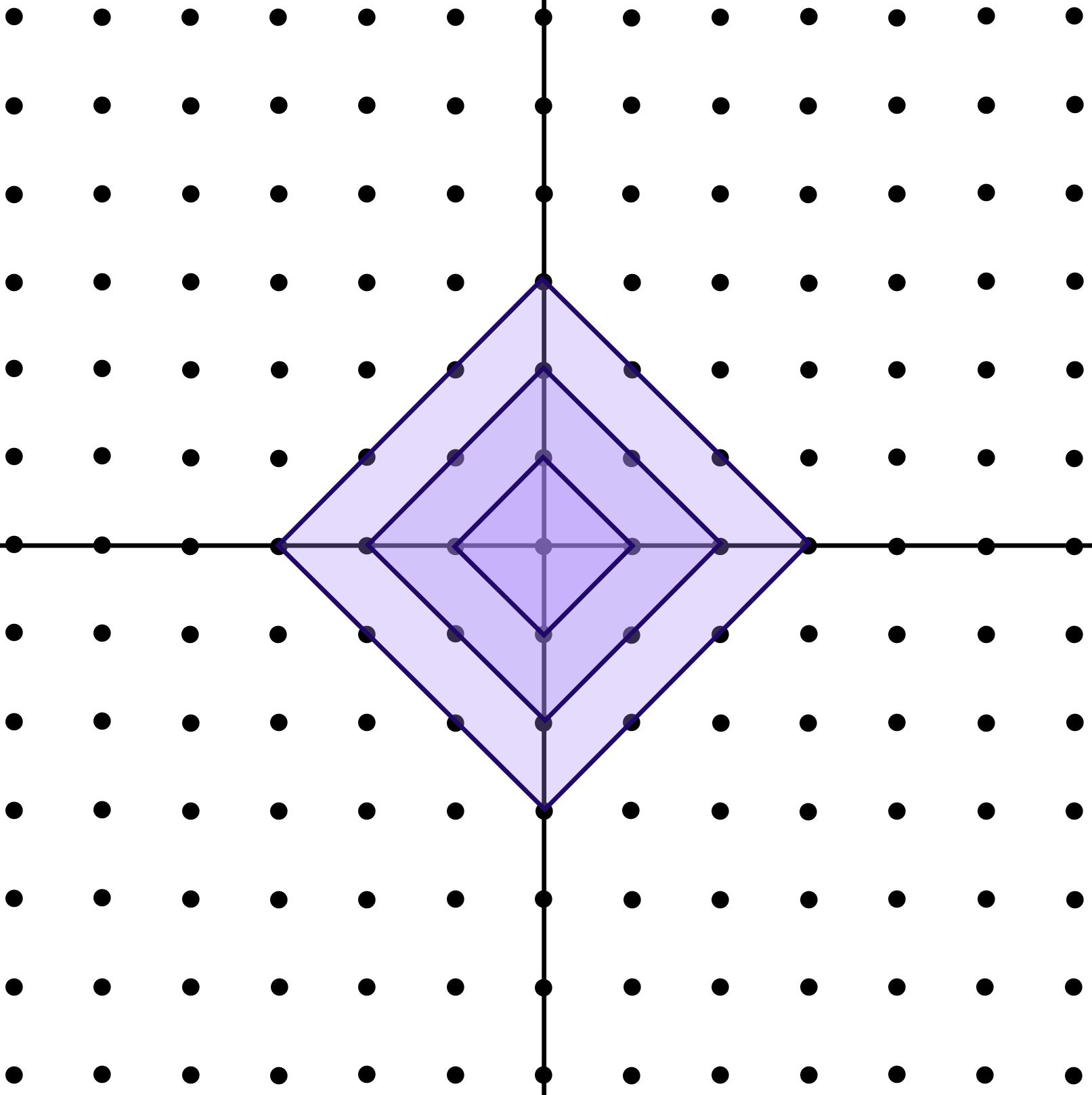}
		\caption{The first three dilates of the standard Coxeter permutahedra $\Pi({\bA_2}), \Pi({\bB_2}), \Pi({\bC_2}),$ and $\Pi({\bD_2})$. Their $t$th dilates contain $1+3t+3t^2$, ($1+4t+7t^2$ for $t$ even and $2t+7t^2$ for $t$ odd), $1+6t+14t^2$, and $1+2t+2t^2$ lattice points, respectively.}
	\label{root permutahedra}
\end{figure}

Combinatorics studies the possibilities of a discrete situation; for example, the possible
ways of reordering, or \Def{permuting} the numbers $1, \ldots, n$. In most situations of
interest, the number of possibilities of a discrete problem is tremendously large, so one
needs to find intelligent ways of organizing them. Geometric combinatorics offers an
approach: model the (discrete) possibilities of a problem with a (continuous) polytope. A
classic example is the \Def{permutahedron} $\Pi_n$, a polytope whose vertices are the $n!$
permutations of $\{1,2,\ldots,n\}$. (Figure \ref{fig:perm} shows the permutahedron
$\Pi_4$.) One can answer many questions about permutations using the geometry of this polytope. In this way, the general strategy of  geometric combinatorics is to model discrete problems continuously.

\begin{figure}[h]
	\centering
	\includegraphics[height=2in]{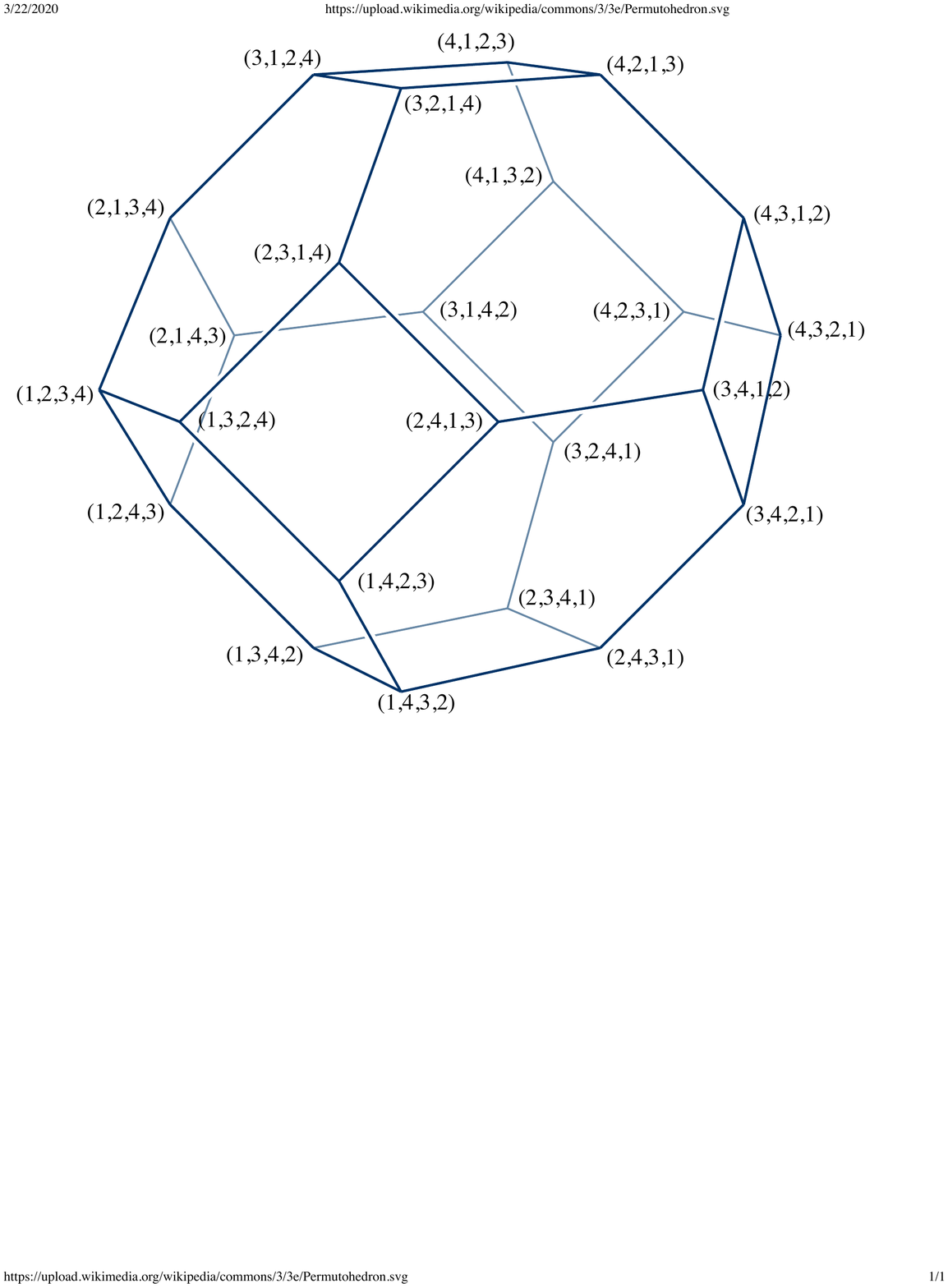}
		\caption{The permutahedron $\Pi_4$ organizes the 24 permutations of $\{1,2,3,4\}$.}
	\label{fig:perm}
\end{figure}

Combining these two forms of interplay between the discrete and the continuous, it is natural to begin with a discrete problem, model it in terms of a continuous polytope, and then measure that polytope discretely. Stanley \cite{stanleyzonotopegraphicaldegree} pioneered this line of inquiry, with the following beautiful theorem.

\begin{theorem}[Stanley \cite{stanleyzonotopegraphicaldegree}] \label{th:stanley}
The Ehrhart polynomial of the permutahedron $\Pi_n$ is 
\[
\ehr_{\Pi_n}(t) \ = \ a_{n-1}t^{n-1} + a_{n-2}t^{n-2} + \cdots + a_1t + a_0 \, ,
\]
where $a_i$ is the number of graphs with $i$ edges on the vertices $\{1, \ldots, n\}$ that contain no cycles. In particular, the normalized volume of the permutahedron $\Pi_n$ is the number of trees on $\{1, \ldots, n\}$, which equals $n^{n-2}$.
\end{theorem}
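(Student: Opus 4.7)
The plan is to recognize $\Pi_n$ as a lattice translate of a zonotope built from the positive roots of type $\bA_{n-1}$, apply the general Ehrhart formula for lattice zonotopes, and translate the resulting sum into graph theory. First, one checks directly that
\[
\Pi_n \ = \ (n, n-1, \ldots, 1) + \sum_{1 \le i < j \le n} [\bzero, e_j - e_i],
\]
for example by verifying that the linear functional $(n, n-1, \ldots, 1)$ is maximized over both sides at the same vertex and that both polytopes share the same edge directions $\pm(e_j - e_i)$. Since Ehrhart polynomials are invariant under integer translation, it suffices to compute the Ehrhart polynomial of the zonotope $Z_n := \sum_{i<j}[\bzero, e_j - e_i]$.

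Second, I would invoke Stanley's formula for the Ehrhart polynomial of a lattice zonotope $Z = \sum_{i=1}^m[\bzero, v_i]$ with $v_i \in \ZZ^n$:
\[
\ehr_Z(t) \ = \ \sum_S h(S) \, t^{|S|},
\]
where the sum runs over linearly independent subsets $S \subseteq \{v_1, \ldots, v_m\}$ and $h(S) := [\, \ZZ^n \cap \lin(S) : \ZZ S\,]$ is the index of the lattice generated by $S$ inside the lattice of integer points in its span. Applied to $Z_n$, the generators $e_j - e_i$ form (up to sign) the columns of the signed incidence matrix of $K_n$, which is totally unimodular; therefore $h(S) = 1$ whenever $S$ is linearly independent, and each such subset contributes exactly $t^{|S|}$.

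Finally, a subset $\{e_{j_k} - e_{i_k}\}$ of root vectors is linearly independent if and only if the associated edges $\{i_k, j_k\}$ form an acyclic subgraph of $K_n$, i.e., a forest on $\{1, \ldots, n\}$. Thus the coefficient $a_i$ equals the number of forests with $i$ edges, and in particular $a_{n-1}$ counts spanning trees of $K_n$, which by Cayley's formula equals $n^{n-2}$. The main work in this plan lies in the first two steps: carefully identifying the integer translate, and certifying $h(S) = 1$ via total unimodularity. Once these are in place, the final combinatorial step is a standard dictionary between linear independence of root vectors and acyclicity of edge sets.
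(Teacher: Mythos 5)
Your proof is correct and takes essentially the same route as the paper: the paper quotes this as Stanley's theorem, but its own machinery (Proposition \ref{prop:Zzonotope} for lattice zonotopes together with the type-$A$ case of Proposition \ref{prop:arithmeticmatroid}, where unimodularity of the graphic incidence matrix gives all relative volumes equal to $1$) specializes to exactly your argument. The only step to tighten is the first one---sharing one vertex and the same edge directions does not by itself force two polytopes to coincide---but the translation identity follows cleanly from $[\be_i,\be_j]=\be_i+[\bzero,\be_j-\be_i]$, which gives $\Pi_n=\bone+\sum_{i<j}[\be_i,\be_j]=(n,n-1,\ldots,1)+\sum_{i<j}[\bzero,\be_j-\be_i]$.
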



\subsection{Our results: measuring classical Coxeter permutahedra}\label{sec:ourresults}

The permutahedron $\Pi_n$ is one of an important family of highly symmetric polytopes: the
reduced, crystallographic \Def{standard Coxeter permutahedra}; see Section \ref{sec:Lie} for a precise definition and some Lie theoretic context.
These polytopes come in four infinite families $A_{n-1}, B_n, C_n, D_n$  $(n \geq 1)$ called the \Def{classical types}, and five exceptions $E_6, E_7, E_8, F_4,$ and $G_2$. 
The standard Coxeter permutahedra of the classical types are the following polytopes in $\RR^n$:
  \begin{eqnarray*}
    \Pi({\bA_{n-1}}) &:=& \text{conv}\{ \text{permutations of } \tfrac12(-n+1, -n+3, \ldots, n-3, n-1) \}, \\
    \Pi({\bB_{n}}) &:=& \text{conv}\{\text{signed permutations of } \tfrac 1 2 (1, 3, \dots, 2n-1)\}, \\
    \Pi({\bC_{n}}) &:=& \text{conv}\{\text{signed permutations of } (1, 2, \dots, n) \}, \\
    \Pi({\bD_{n}}) &:=& \text{conv}\{\text{evenly signed permutations of } (0, 1, \dots, n-1)\}.
  \end{eqnarray*}
Here a \Def{signed permutation} of a sequence $S$ is obtained from a permutation of $S$ by introducing signs to the entries arbitrarily; the \Def{evenly signed permutations} are those that introduce an even number of minus signs. Figure \ref{root permutahedra} shows the standard Coxeter permutahedra $\Pi({\bA_2}), \Pi({\bB_2}), \Pi({\bC_2}),$ and $\Pi({\bD_2})$, as well as their second and third dilates.

The goal of this paper is to understand the Ehrhart theory of these four families of polytopes. Our main results are the following. Theorem \ref{th:Ehrhart} generalizes Stanley's Theorem \ref{th:stanley}, offering combinatorial formulas for the Ehrhart quasipolynomials of the Coxeter permutahedra $\Pi({\bA_{n-1}}), \Pi({\bB_{n}}), \Pi({\bC_{n}})$, and $\Pi({\bD_{n}})$ in terms of the combinatorics of forests. Theorems \ref{th:egfZono} and \ref{th:egfCZono}
then give explicit formulas: they compute the exponential generating functions of those
Ehrhart quasipolynomials, in terms of the Lambert $W$ function. Proposition
\ref{prop:zonotope} is an intermediate step that may be of independent interest: it describes
the Ehrhart theory of a rational translate of an integral zonotope. This result was used in \cite{ardilasupinavindas} to compute the equivariant Ehrhart theory of the permutahedron.

We remark that each of these zonotopes can be translated to become an integral polytope, and the Ehrhart polynomials of these integral translates were computed in~\cite{ardilacastillohenley}; 
see also \cite{deconciniprocesizonotope,dezamanoussakisonn} for related work.


\section{Preliminaries}

\subsection{Ehrhart theory}\label{sec:Ehrhart}

A \Def{rational polytope} $P \subset \RR^d$ is the convex hull of finitely many points in $\QQ^d$. We define
\[
  \ehr_P(t) \ := \ \left| tP \cap \ZZ^d \right| ,
\]
for positive integers $t$.
Ehrhart~\cite{ehrhartpolynomial} famously proved that this lattice-point counting function
evaluates to a \Def{quasipolynomial} in $t$, that is,
\[
  \ehr_P(t) \ = \ c_d(t) \, t^d + c_{d-1}(t) \, t^{d-1} + c_0(t)
\]
where $c_0(t), \dots, c_d(t) : \ZZ \rightarrow \QQ$ are periodic functions in $t$; their minimal common period is
the \Def{period} of $\ehr_P(t)$.
Ehrhart also proved that the period of $\ehr_P(t)$ divides the least common multiple of the
denominators of the vertex coordinates of $P$.
In particular, if $P$ is an \emph{integral} polytope, then $\ehr_P(t)$ is a polynomial.

All the polytopes we will consider in this paper are half integral. Therefore the periods of their Ehrhart quasipolynomials will be either 1 or~2. For more on Ehrhart quasipolynomials, see, e.g.,~\cite{ccd}.

\subsection{Zonotopes}\label{sec:zonotopes}

A \Def{zonotope} is the Minkowski sum $\Zono(A)$ of a finite set of line segments $A = \{[\ba_1, \bb_1], \ldots, [\ba_n, \bb_n]\}$ in $\RR^d$; that
is,
\begin{eqnarray*}
  \Zono(A) &:=& \sum_{j =1}^n [\ba_j, \bb_j] \\
  &=& \Big\{\sum_{j =1}^n \bc_j \, : \, \bc_j \in  [\ba_j, \bb_j] \text{ for } 1 \leq j \leq
n \Big\} .
\end{eqnarray*}
For a finite set of vectors $\bU \subset \RR^d$ we define 
\[
\Zono(\bU) \ := \ \sum_{\bu \in \bU} [\bzero, \bu] \, .
\]

Shephard~\cite{shephardzonotopes} showed that the zonotope $\Zono(A)$ may be decomposed as a disjoint union of translates of the half-open parallelepipeds 
\[
\hobox \, \bI \ := \ \sum_{\bu \in \bI} [\bzero, \bu)
\]
spanned by the linearly independent subsets $\bI$ of $\{ \bb_j - \ba_j : 1 \leq j \leq n\}$. This decomposition contains exactly one parallelepiped for each independent subset.
Figure~\ref{u3} displays such a \Def{zonotopal decomposition} of a hexagon.
\begin{figure}[h!]
	\centering
	\includegraphics[width=2in]{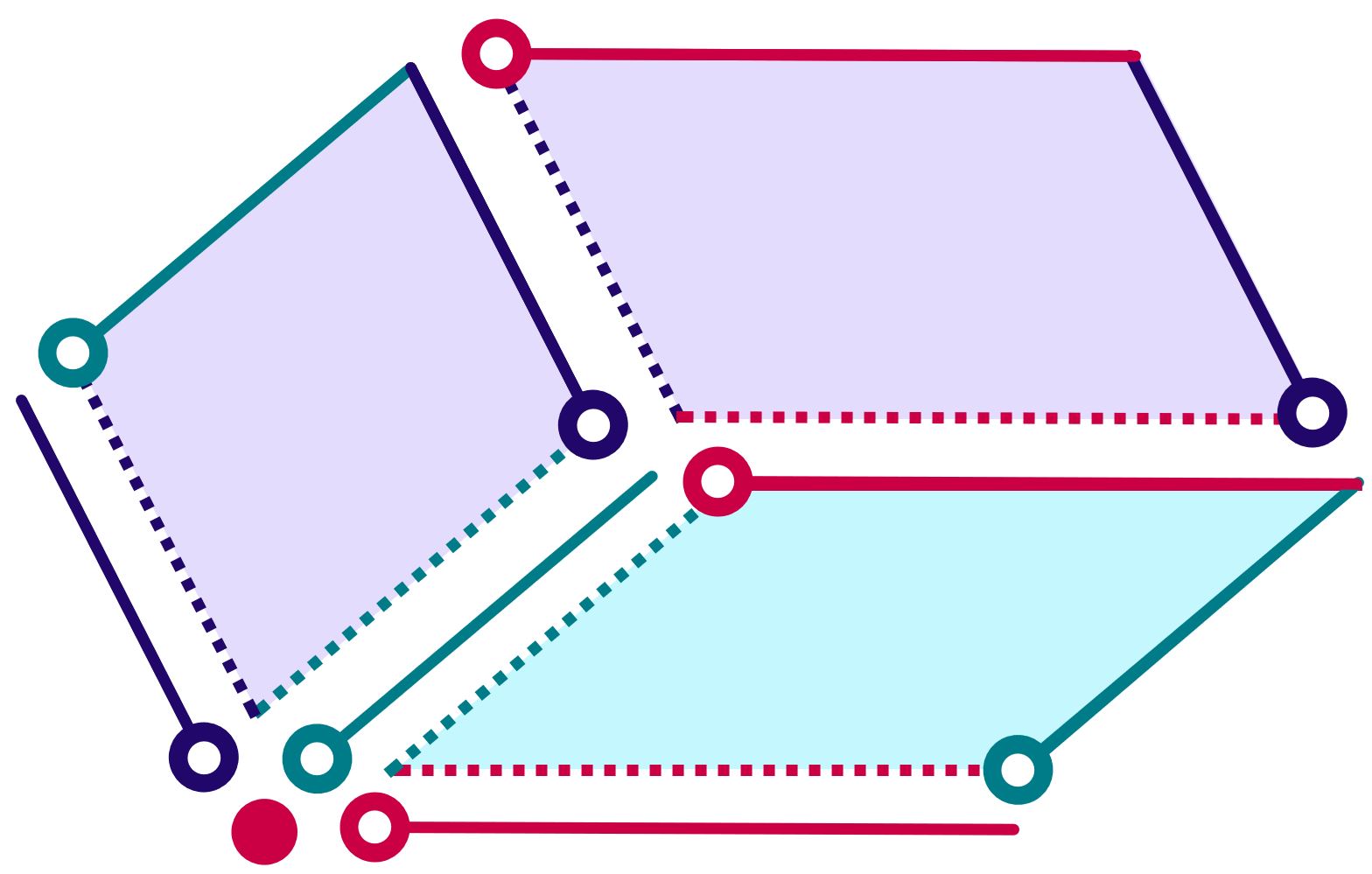}
	\caption{A decomposition of a hexagon into half-open parallelepipeds.}
	\label{u3}
\end{figure}

A useful feature of this decomposition is that lattice half-open parallelepipeds are 
arithmetically quite simple: $\hobox \, \bI$ contains exactly $\vol (\hobox \, \bI)$ lattice
points, where $\vol (\hobox \, \bI)$ denotes the \Def{relative volume} of $\hobox \, \bI$, measured with
respect to the sublattice $\ZZ^d \cap \aff (\hobox \, \bI)$ in the affine space spanned by the parallelepiped.
This implies the following result.

\begin{proposition} (Stanley, \cite{stanleyzonotopegraphicaldegree}) \label{prop:Zzonotope}
Let $\bU \subset \ZZ^d$ be a finite set of vectors. Then the Ehrhart polynomial of the integral zonotope $\Zono(\bU)$ is
\[
  \ehr_{\Zono(\bU) } (t) \ = \sum_{ \stackrel{\bW \subseteq \bU}{ \text{ \rm lin.~indep.} }}
 \vol(\bW) \, t^{ |\bW| }
\]
where $|\bW|$ denotes the number of vectors in $\bW$ and $\vol(\bW)$ is the relative volume of the parallelepiped generated by $\bW$.
\end{proposition}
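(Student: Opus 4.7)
The plan is to combine Shephard's zonotopal decomposition with the arithmetic principle, already noted above, that a lattice half-open parallelepiped $\hobox\,\bI$ contains exactly $\vol(\hobox\,\bI)$ lattice points.

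First I would apply Shephard's decomposition not to $\Zono(\bU)$ itself but to its $t$th dilate $t\Zono(\bU) = \Zono(t\bU)$, where $t\bU := \{t\bu : \bu \in \bU\}$. Since scaling by $t$ preserves linear independence, the independent subsets of $t\bU$ are precisely the sets $t\bW$ for linearly independent $\bW \subseteq \bU$, and so
\[
t\Zono(\bU) \ = \ \bigsqcup_{\bW \subseteq \bU \text{ lin.~indep.}} \Big(\bx_\bW + \hobox\,t\bW\Big)
\]
for certain translation vectors $\bx_\bW$. Because $\bU \subset \ZZ^d$ and the translations produced by Shephard's construction are integer combinations of the generators, each $\bx_\bW$ lies in $\ZZ^d$ and therefore does not affect the lattice-point count of its block.

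Second I would count the lattice points in $\hobox\,t\bW$ for each linearly independent $\bW$. Writing a typical point as $\sum_{\bu \in \bW} \lambda_\bu \bu$ with $0 \le \lambda_\bu < t$ and decomposing $\lambda_\bu = k_\bu + \mu_\bu$ with $k_\bu \in \{0,1,\ldots,t-1\}$ and $0 \le \mu_\bu < 1$, one obtains the disjoint partition
\[
\hobox\,t\bW \ = \ \bigsqcup_{\bk \in \{0,\ldots,t-1\}^{\bW}} \Big(\sum_{\bu \in \bW} k_\bu \bu \ + \ \hobox\,\bW\Big)
\]
into $t^{|\bW|}$ lattice translates of $\hobox\,\bW$. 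By the noted arithmetic principle, each translate contains exactly $\vol(\bW)$ lattice points, so $\hobox\,t\bW$ contributes $t^{|\bW|}\vol(\bW)$ lattice points.

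Summing these contributions over all linearly independent $\bW \subseteq \bU$ yields the claimed formula for $\ehr_{\Zono(\bU)}(t)$. The only subtle points to verify are that the integer translation vectors from Shephard's decomposition really do not distort lattice-point counts, and that the sublattice $\ZZ^d \cap \aff(\hobox\,t\bW)$ coincides with $\ZZ^d \cap \aff(\hobox\,\bW)$, so that the relative volumes scale honestly by the factor $t^{|\bW|}$; both are direct consequences of the hypothesis $\bU \subset \ZZ^d$. No single step is genuinely hard, provided one trusts Shephard's decomposition as a black box.
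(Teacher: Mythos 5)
Your proposal is correct and follows essentially the same route the paper intends: Shephard's decomposition into half-open parallelepipeds indexed by linearly independent subsets, combined with the fact that a lattice half-open parallelepiped contains exactly its relative volume in lattice points. The paper leaves the dilation step implicit, and your partition of $\hobox\,t\bW$ into $t^{|\bW|}$ lattice translates of $\hobox\,\bW$ is the standard, correct way to fill in that detail.
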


\subsection{Lie combinatorics} \label{sec:Lie}

Assuming familiarity with the combinatorics of Lie theory \cite{humphreysCoxeter} (for this
section only), we briefly explain the geometric origin of the polytopes that are our main
objects of study. 
Finite \Def{root systems} are highly symmetric configurations of vectors that play a central role in many areas of mathematics and physics, such as the classification of regular polytopes \cite{coxeter} and of semisimple Lie groups and Lie algebras \cite{humphreysLie}. The finite crystallographic root systems can be completely classified; they come in four infinite families:
\begin{align*}
\bA_{n-1} \ &:= \ \left\{\pm(\be_i-\be_j) : \, 1\leq i<j\leq n \right\}, \\
\bB_n \ &:= \ \left\{\pm(\be_i-\be_j),~ \pm(\be_i+\be_j) : \, 1\leq i<j\leq n \right\}\cup \left\{\pm \be_i : \, 1\leq i\leq n \right\}, \\
\bC_n \ &:= \ \left\{\pm(\be_i-\be_j),~ \pm(\be_i+\be_j) : \, 1\leq i<j\leq n \right\}\cup \left\{ \pm 2 \, \be_i : \, 1\leq i\leq n \right\}, \\
\bD_n \ &:= \ \left\{\pm(\be_i-\be_j),~ \pm(\be_i+\be_j) : \, 1\leq i<j\leq n \right\}
\end{align*}
and five exceptions: $E_6, E_7, E_8, F_4,$ and $G_2$. For each of the four infinite families $A_n, B_n, C_n, D_n$ of root systems $\Phi$, we can let the \Def{positive roots} $\Phi^+$ be those obtained by choosing the plus sign in each $\pm$ above.

Let $\Phi$ be a finite root system of rank $d$ and $W$ be its Weyl group. Let $\Phi^+ \subset
\Phi$ be a choice of positive roots. The \Def{standard Coxeter permutahedron of $\Phi$} is
the zonotope 
\begin{eqnarray*}
\Pi(\Phi) &:= &\sum_{\alpha \in \Phi^+} \left[- \tfrac \alpha 2, \tfrac \alpha 2 \right] \\
&=& \text{conv} \{ w\cdot \rho : \, w \in W\}
\end{eqnarray*}
where $\rho := \frac12(\sum_{\alpha \in \Phi^+} \alpha)$. 
These polytopes, and their deformations, are fundamental objects in the representation theory of semisimple Lie algebras \cite{humphreysLie}, in many problems in optimization \cite{ardilacastilloeurpostnikov}, and in the combinatorics of (signed) permutations, among other areas.

For the classical root systems $A_{n-1}, B_n, C_n, D_n$, the standard Coxeter permutahedra are precisely the polytopes $\Pi({A_{n-1}}), \Pi({B_n}), \Pi({C_n}), \Pi({D_n})$ introduced in Section~\ref{sec:ourresults}.


\section{Almost integral zonotopes and their Ehrhart theory}

The arithmetic of zonotopes described in Section~\ref{sec:zonotopes} becomes much more subtle when the zonotope is not integral. However, we can still describe it for \Def{almost integral zonotopes} $\bv + \Zono(\bU)$ , which are obtained by translating
an integral zonotope $\Zono(\bU)$ by a rational vector $\bv$. They satisfy  the following analog of Stanley's Proposition \ref{prop:Zzonotope}.

\begin{proposition} \label{prop:zonotope}
Let $\bU \in \ZZ^d$ be a finite set of integer vectors and $\bv \in \QQ^d$ be a rational vector.
Then the Ehrhart quasipolynomial of the almost integral zonotope $\bv + \Zono(\bU)$ equals
\[
  \ehr_{ \bv + \Zono(\bU) } (t) \ = \sum_{\stackrel{\bW \subseteq \bU}{ \text{ \rm lin.~indep.} }}
\chi_\bW(t) \, \vol(\bW) \, t^{ |\bW| }
\]
where 
\[
  \chi_\bW(t) \ := \ \begin{cases}
    1 & \text{ \rm if } (t \bv + \thespan(\bW)) \cap \ZZ^d \ne \emptyset , \\ 
    0 & \text{ \rm otherwise. } 
  \end{cases}
\]
\end{proposition}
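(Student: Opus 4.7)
The plan is to follow Stanley's derivation of Proposition \ref{prop:Zzonotope}, carefully tracking how the translation by $\bv$ affects Shephard's zonotopal decomposition. The starting point is to decompose $\Zono(\bU)$ as a disjoint union
\[
\Zono(\bU) \ = \ \bigsqcup_{\bW} \bigl(\bs_\bW + \hobox \bW \bigr)
\]
of half-open parallelepiped translates, one for each linearly independent $\bW \subseteq \bU$. The feature I would exploit is that Shephard's translation vectors $\bs_\bW$ are explicit integer combinations of vectors of $\bU$, so in particular $\bs_\bW \in \ZZ^d$.

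Dilating by $t$ and translating by $t\bv$, and using $t\cdot \hobox \bW = \hobox(t\bW)$, I obtain
\[
t\bigl(\bv + \Zono(\bU)\bigr) \ = \ \bigsqcup_\bW \bigl(t\bv + t\bs_\bW + \hobox(t\bW)\bigr),
\]
so the Ehrhart count decomposes as $\ehr_{\bv + \Zono(\bU)}(t) = \sum_\bW \bigl|(t\bv + t\bs_\bW + \hobox(t\bW))\cap \ZZ^d\bigr|$. This reduces the proposition to one key fact: for any linearly independent integer set $\bW$ and any $\by\in\RR^d$, the count $|(\by + \hobox(t\bW))\cap \ZZ^d|$ equals $t^{|\bW|}\vol(\bW)$ when the affine flat $\by + \thespan(\bW)$ meets $\ZZ^d$, and $0$ otherwise.

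I would prove this auxiliary fact by translating to the origin. If $\by + \thespan(\bW)$ contains a lattice point $\by_0$, then $\by + \hobox(t\bW) = \by_0 + \bigl((\by - \by_0) + \hobox(t\bW)\bigr)$ with $\by - \by_0 \in \thespan(\bW)$, so counting lattice points reduces to counting lattice points of $\thespan(\bW)\cap \ZZ^d$ inside a translate of $\hobox(t\bW)$ living in $\thespan(\bW)$. Since $\hobox(t\bW)$ is a fundamental domain for $\langle t\bW\rangle \subset \thespan(\bW)$, any translate of it in $\thespan(\bW)$ contains exactly $[\thespan(\bW)\cap \ZZ^d : \langle t\bW\rangle] = t^{|\bW|}\vol(\bW)$ lattice points of $\thespan(\bW)\cap\ZZ^d$. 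Finally, since $t\bs_\bW \in \ZZ^d$, the flat $t\bv + t\bs_\bW + \thespan(\bW)$ meets $\ZZ^d$ if and only if $t\bv + \thespan(\bW)$ does, i.e., exactly when $\chi_\bW(t) = 1$; summing over all $\bW$ yields the formula. The main subtlety I anticipate is keeping the lattice-point count in the translated parallelepipeds honest — in particular, noting that the count depends only on whether the ambient affine flat meets $\ZZ^d$, not on where $\bv$ sits inside that flat — so that the only periodicity in $t$ comes from the indicator $\chi_\bW(t)$.
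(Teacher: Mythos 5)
Your proposal is correct and follows essentially the same route as the paper: both decompose $t(\bv+\Zono(\bU))$ into integer translates of the half-open parallelepipeds $t\bv + \hobox\,(t\bW)$ via Shephard's theorem, and both count the lattice points in each piece by observing that the parallelepiped tiles its affine span, so the count is the lattice index $t^{|\bW|}\vol(\bW)$ when $t\bv+\thespan(\bW)$ meets $\ZZ^d$ and $0$ otherwise. Your explicit remark that the Shephard offsets $\bs_\bW$ are integral, so they do not affect whether the flat meets $\ZZ^d$, is the same point the paper makes implicitly by speaking of \emph{lattice} translates.
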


\begin{proof}
The zonotope $t(\bv+\Zono(\bU))$ can be subdivided into lattice translates of the half-open parallelepipeds $t(\bv+ \hobox \, \bW)$ for the linearly independent subsets $\bW \subseteq \bU$. Let us count the lattice points in $t(\bv+ \hobox \, \bW)$; there are two cases:

1. If $t \bv + \thespan(\bW)$ does not intersect $\ZZ^d$ then 
$ |t(\bv +  \hobox \, \bW) \cap \ZZ^d| = 0$.

2. If $t \bv + \thespan(\bW)$ contains a lattice point $\bu \in \ZZ^d$, then it also contains the lattice points $\bu + \bw$ for all $\bw \in \bW$, so $\Lambda:= (t \bv + \thespan(\bW)) \cap \ZZ^d$ is a $|\bW|$-dimensional lattice.
Since $t \bv + \thespan(\bW)$ can be tiled by integer translates of the half-open parallelepiped $t(\bv +  \hobox \, \bW)$, and that linear space contains the lattice $\Lambda$,  each tile must contain $\vol (t \cdot \hobox \, \bW)$ lattice points. Therefore
\[
  \left| t(\bv +  \hobox \, \bW) \cap \ZZ^d \right|
  \ = \ \vol (t \cdot \hobox \, \bW)
  \ = \ \vol (\hobox \, \bW) \, t^{|\bW|} 
\]
and the desired result follows. 
\end{proof}


In \cite{ardilasupinavindas}, Proposition \ref{prop:zonotope} is used to describe the equivariant Ehrhart theory of the permutahedron and prove a series of conjectures due to Stapledon \cite{stapledonequivariant} in this special case.

\section{Classical root systems, signed graphs and Ehrhart functions}

We will express the Ehrhart quasipolynomials of the classical Coxeter permutahedra in terms of the combinatorics of signed graphs. These objects originated in the social sciences and have found applications also in biology, physics, computer science, and economics; they are a very useful combinatorial model for the classical root systems. See \cite{zaslavskydynamicsurvey} for a comprehensive bibliography.

\subsection{Signed graphs as a model for classical root systems}

A \Def{signed graph} $G = (\Gamma,\sigma)$ consists of a graph $\Gamma = (V,E)$ and a signature
$\sigma \in \left\{ \pm \right\}^E$. The underlying graph $\Gamma$ may have multiple edges, loops,
\Def{halfedges} (with only one endpoint), and \Def{loose edges} (with no endpoints); the latter two have
no signs. For the applications we have in mind, we may assume that $G$ has no loose edges and no repeated signed edges; we do allow $G$ to have two parallel edges with opposite signs.

A signed graph $G = (\Gamma, \sigma)$ is \Def{balanced} if each cycle has an even number of negative edges.
An unsigned graph can be realized by a signed graph all of whose edges are labelled with $+$; it is automatically balanced.

Continuing a well-established dictionary~\cite{zaslavskygeometryrootsystems}, we encode a subset $S \subseteq \Phi^+$ of positive roots of one of
the classical root systems $\Phi \in \{ A_{n-1}, B_n, C_n, D_n : \, n \geq 1 \}$ in the signed graph $G_S$ on $n$ nodes with

\medskip
\begin{tabular}{lcl}
$\bullet$ a positive edge $ij$ for each $\be_i - \be_j \in S$, \qquad &&
$\bullet$ a halfedge at $j$ for each $\be_j \in S$, and \\
$\bullet$ a negative edge $ij$ for each $\be_i + \be_j \in S$, \qquad &&
$\bullet$ a negative loop at $j$ for each $2\be_j \in S$.
\end{tabular}
\medskip

The \Def{$\Phi$-graphs} are the signed graphs encoding the subsets of $\Phi^+$. More explicitly, a signed graph is an \Def{$A_{n-1}$-graph} (or simply a \Def{graph})  if it contains only positive edges, a \Def{$B_n$}-graph if it contains no loops, a \Def{$C_n$-graph} if it contains no halfedges, and a \Def{$D_n$-graph} if it contains neither halfedges nor loops. For a $\Phi$-graph $G$, we let $\Phi_G \subseteq \Phi^+$ be the corresponding set of positive roots of $\Phi$.

It will be important to understand which subsets of $\Phi^+$ are linearly independent; to this end we make the following definitions.
	\begin{itemize}
		\item A (signed) \Def{tree} is a connected (signed) graph with no cycles, loops, or halfedges.
		\item A (signed) \Def{halfedge-tree}  is a connected (signed) graph with no cycles or loops, and a single halfedge.
		\item A (signed) \Def{loop-tree}  is a connected (signed) graph with no cycles or halfedges, and a single loop.
		\item A (signed) \Def{pseudotree} is a connected (signed) graph with no loops or halfedges that contains a single cycle (which is unbalanced). 
		\item A \Def{signed pseudoforest}  is a signed graph whose connected components are signed trees, signed halfedge-trees, signed loop-trees, or signed pseudotrees.
		\item A \Def{$\Phi$-forest} is a signed pseudoforest that is a $\Phi$-graph  for $\Phi \in \{A_{n-1}, B_n, C_n, D_n : \, n \geq 1\}$.
		\item A \Def{$\Phi$-tree} is a connected $\Phi$-forest for $\Phi \in \{A_{n-1}, B_n, C_n, D_n : \, n \geq 1\}$.
	\end{itemize} 

\noindent In particular the $A_{n-1}$-pseudoforests are the forests on $[n] := \{ 1, 2, \dots, n \}$. For a signed pseudoforest $G$, we let $\tc(G)$, $\hc(G)$, $\lc(G)$, and $\pc(G)$ be the number of tree components, halfedge-tree components, loop-tree components, and pseudotree components, respectively.

In this language, we recall and expand on results by Zaslavsky~\cite{zaslavskysignedgraphs} and Ardila--Castillo--Henley \cite{ardilacastillohenley} on the arithmetic matroids of the classical root systems.
Recall that for a linearly independent set $\bW \subset \ZZ^n$, we write $\vol(\bW)$ for the
relative volume of the parallelepiped $\Zono(\bW)$ generated by $\bW$.

\begin{proposition} \label{prop:arithmeticmatroid}
 \cite{ardilacastillohenley, zaslavskysignedgraphs}
Let $\Phi \in \{ A_{n-1}, B_n, C_n, D_n \}$ be a root system. The independent subsets of $\Phi^+$ are the sets $\Phi_G$ 
for the $\Phi$-forests $G$ on $[n]$. For each such $G$,
\[
  |\Phi_G| = n - \tc(G)
  \qquad \text{ and } \qquad
  \vol (\Phi_G) = 2^{\pc(G) + \lc(G)}.
\]
\end{proposition}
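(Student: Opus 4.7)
The plan is to reduce the statement to an analysis of the connected components of $G$, classify each component type combinatorially, and then carry out a short determinant computation per type using the signed-graph switching operation.

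\textbf{Reduction to components.} Let $C_1, \ldots, C_r$ be the connected components of $G$. The root vectors in $\Phi_{C_i}$ are supported on the coordinates indexed by $V(C_i)$, and these index sets are pairwise disjoint. Hence $\Phi_G$ is linearly independent if and only if every $\Phi_{C_i}$ is, and in that case $\vol(\Phi_G) = \prod_i \vol(\Phi_{C_i})$. So it suffices to treat one connected signed graph $C$ on $k := |V(C)|$ vertices at a time.

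\textbf{Independence and the size formula.} The span $\thespan(\Phi_C) \subseteq \RR^{V(C)}$ equals the full space $\RR^k$ if $C$ contains a halfedge, a loop, or an unbalanced cycle, and otherwise equals a hyperplane of dimension $k-1$ (after switching to the all-positive case, this is the zero-sum hyperplane). Therefore $\Phi_C$ can contain at most $k-1$ or at most $k$ independent vectors in these two cases, respectively. Equality is achieved precisely when $C$ is a tree (first case) or one of a halfedge-tree, loop-tree, or pseudotree with unbalanced cycle (second case); in each of these four types, a direct check shows that the defining edges are indeed independent, while any extra edge produces either a balanced cycle or a second halfedge/loop/cycle, forcing a linear dependence. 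Summing $|E(C)| = |V(C)|-1$ over tree components and $|E(C)| = |V(C)|$ over the other three types gives $|\Phi_G| = n - \tc(G)$.

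\textbf{Volumes per component.} The \emph{switching} operation on signed graphs, which negates the signs of every edge incident to a chosen vertex $v$, corresponds to the unimodular change of basis $\be_v \mapsto -\be_v$ on $\ZZ^{V(C)}$; it therefore preserves both the ambient lattice and the relative volume. A signed tree or halfedge-tree switches to its all-positive counterpart, whose edge matrix is $\pm 1$-unimodular, so the volume is $1$. A loop-tree switches to a positive tree together with a single loop vector $2\be_j$; reducing against the tree gives determinant $\pm 2$ and volume $2$. For a pseudotree with unbalanced cycle, switching lets us assume every tree edge is positive while the unique remaining edge $\be_i + \be_j$ is negative; the positive tree path from $i$ to $j$ writes $\be_i - \be_j$ as a combination of tree edges, and combining it with $\be_i + \be_j$ yields a vector of the form $2\be_v$, so again the determinant is $\pm 2$ and the volume is $2$. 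Multiplying over components gives $\vol(\Phi_G) = 2^{\lc(G) + \pc(G)}$.

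\textbf{Main obstacle.} The subtle step is the pseudotree computation: the factor of $2$ must be extracted from the unbalanced cycle as a whole rather than from any single edge, and one has to verify that switching really acts unimodularly on the ambient lattice $\ZZ^n$ while also preserving the relative volume inside $\ZZ^n \cap \thespan(\Phi_C)$. Once that reduction is in place, the remaining cases reduce to routine row operations on signed incidence matrices, and the independence statement and the size formula then follow from the dimension computations in the second step.
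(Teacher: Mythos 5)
Your argument is correct and self-contained; note that the paper does not actually prove Proposition \ref{prop:arithmeticmatroid} but cites it to Zaslavsky and Ardila--Castillo--Henley, and your component-by-component analysis via switching and signed incidence matrices is exactly the standard argument in those references. The one point worth stating explicitly is that for a tree component the relative volume is measured against the lattice $\ZZ^{V(C)} \cap \thespan(\Phi_C)$ (the type-$A$ root lattice of the component), for which the tree edge vectors form a basis; you gloss this as ``$\pm 1$-unimodular,'' which is fine but is the only place a reader might pause.
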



\subsection{Ehrhart quasipolynomials of standard Coxeter permutahedron of classical type} 

We also define the \Def{integral Coxeter permutahedron}
\[
\Pi^\ZZ(\Phi) \ := \ \sum_{\alpha \in \Phi^+} [0,\alpha].
\]
This is a translate of the standard Coxeter permutahedron $\Pi(\Phi)$ which is an integral polytope for all $\Phi$. Its Ehrhart theory was computed in \cite{ardilacastillohenley}. This is sometimes, but not always, the same as the Ehrhart theory of $\Pi(\Phi)$, as we will see in this section, particularly in Theorem~\ref{th:Ehrhart}.

It follows from the description in Section \ref{sec:ourresults} that the standard Coxeter permutahedron $\Pi(\Phi)$ is an integral polytope precisely for $\Phi \in  \{A_{n-1} : \, n\geq 1 \text{ odd}\} \cup \{C_n : \, n \geq 1\} \cup \{D_n : \, n \geq 1\}$.
It is shifted $\frac 1 2 \bone := \frac 1 2 (\be_1 + \dots + \be_n)$ away from being integral for $\Phi \in \{A_n : \, n \geq 2 \text{ even}\} \cup \{B_n : \, n \geq 1\}$.

\newpage

\begin{proposition}\label{prop:latticeplanes}
Let $\Phi \in \{A_n : \, n \geq 2 \text{ even}\} \cup \{B_n : \, n \geq 1\}$.
For a $\Phi$-forest $G$, the affine subspace $\frac 1 2 \bone + \thespan(\Phi_G)$ contains lattice points if and only if every (signed or unsigned) tree component of $G$ has an even number of vertices.
\end{proposition}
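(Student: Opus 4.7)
The plan is to decompose $\thespan(\Phi_G)$ according to the connected components of $G$ and check the lattice condition one component at a time. If $V_1, \ldots, V_k$ are the vertex sets of the components $C_1, \ldots, C_k$ of $G$ (so that $V_1 \sqcup \cdots \sqcup V_k = [n]$), then every vector of $\Phi_{G_{C_j}}$ lies in $\RR^{V_j}$, and these coordinate subspaces are mutually orthogonal. Hence $\thespan(\Phi_G)$ splits as $\bigoplus_j \thespan(\Phi_{G_{C_j}})$, and $\tfrac{1}{2}\bone + \thespan(\Phi_G)$ meets $\ZZ^n$ if and only if for every component $C$ the affine subspace $\tfrac{1}{2}\bone_{V_C} + \thespan(\Phi_{G_C})$ meets $\ZZ^{V_C}$.

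For non-tree components $C$---halfedge-trees or pseudotrees, which can arise only when $\Phi = B_n$---Proposition~\ref{prop:arithmeticmatroid} supplies $|\Phi_{G_C}| = |V_C|$ linearly independent vectors inside $\RR^{V_C}$, so $\thespan(\Phi_{G_C}) = \RR^{V_C}$. The lattice condition is then automatic, and such components impose no constraint.

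For a tree component $C$ (signed, or in the $A$-case unsigned), $\thespan(\Phi_{G_C})$ is a hyperplane in $\RR^{V_C}$, and I would pin down its normal $\lambda \in \RR^{V_C}$ explicitly. Parametrize each edge $e = uv$ by $\tau_e \in \{\pm 1\}$ so that the associated root is $\be_u - \tau_e \be_v$; fix any root vertex $r \in V_C$, set $\lambda_r := 1$, and for every other $v \in V_C$ set $\lambda_v := \prod_{e \in P(r,v)} \tau_e$, where $P(r,v)$ is the unique path in $C$ from $r$ to $v$. Then $\lambda_u = \tau_e \lambda_v$ for every edge $e = uv$, so $\lambda \cdot (\be_u - \tau_e \be_v) = 0$ and $\lambda$ is normal to the span. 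The crucial point is that $\lambda \in \{\pm 1\}^{V_C}$.

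The translated hyperplane $\tfrac{1}{2}\bone_{V_C} + \thespan(\Phi_{G_C})$ is therefore $\{\bx \in \RR^{V_C} : \lambda \cdot \bx = \tfrac{1}{2}\sum_i \lambda_i\}$. Since $\lambda$ has $\pm 1$ entries, $\lambda \cdot \ZZ^{V_C} = \ZZ$, and the subspace meets $\ZZ^{V_C}$ precisely when $\tfrac{1}{2}\sum_i \lambda_i \in \ZZ$, i.e., when $\sum_i \lambda_i$ is even. As each $\lambda_i \equiv 1 \pmod 2$, this reduces to $|V_C|$ being even, completing the equivalence. The principal obstacle is the explicit construction of the $\pm 1$ normal for a signed tree; once this structural fact is in place, the parity computation is immediate.
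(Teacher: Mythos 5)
Your proof is correct, and on the two substantive points it takes a genuinely different route from the paper's. The component-by-component reduction is the same in both arguments. After that, the paper works in the ``primal'' picture: it parametrizes points of $\tfrac12\bone + \thespan(\Phi_G)$ by edge weights $\lambda_{\bs}$, reduces signed trees to unsigned trees by vertex and edge switching, and then builds explicit weights for halfedge-trees and for pseudotrees (the latter via a small linear system modulo $1$ around the unbalanced cycle). You instead work dually for tree components: the span is a hyperplane, you exhibit its normal vector $\lambda \in \{\pm1\}^{V_C}$ directly --- this $\pm1$ vector is precisely the switching function the paper invokes, so the two arguments encode the same combinatorial data --- and the criterion $\sum_i \lambda_i \equiv |V_C| \pmod 2$ falls out at once, treating the signed and unsigned cases uniformly. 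The bigger economy is your treatment of the non-tree components: citing the rank formula $|\Phi_{G_C}| = |V_C| - \tc(G_C)$ of Proposition~\ref{prop:arithmeticmatroid}, a halfedge-tree or pseudotree component has full rank, so $\thespan(\Phi_{G_C}) = \RR^{V_C}$ and the lattice condition is vacuous; this replaces the paper's cases (iii) and (iv) entirely. The trade-off is that you lean on Proposition~\ref{prop:arithmeticmatroid} as a black box, whereas the paper's explicit pseudotree computation also explains, in passing, why $\vol(\Phi_G) = 2$ for those components.
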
 

\begin{proof}
Let $G_1, \ldots, G_k$ be the connected components of $G$, on vertex sets $V_1, \ldots, V_k$,
respectively. Along the decomposition $\RR^n = \RR^{V_1} \oplus \cdots \oplus \RR^{V_k}$, we have
\[
\tfrac 1 2 \bone + \thespan(\Phi_G) \ = \
\sum_{i=1}^k \tfrac 1 2 \bone_{V_i} + \thespan(\Phi_{G_i})
\]
where $\bone_{V} := \sum_{i \in V} \be_i$ for $V \subseteq [n]$. 
Therefore $\tfrac 1 2 \bone + \thespan(\Phi_G)$ contains a lattice point in $\ZZ^n$  if and only if $\tfrac 1 2 \bone_{V_i} + \thespan(\Phi_{G_i})$ contains a lattice point in $\ZZ^{V_i}$ for every $1 \leq i \leq k$. For this reason, it suffices to prove the proposition for $\Phi$-trees. 

For every labeling $\lambda \in \RR^{E(G)}$ of the edges of $G$ with scalars, we will write
\begin{equation} \label{eq:lambda}
 \bv_G(\blambda) \ := \ \tfrac 1 2 \bone + \sum_{ \bs \in E(G) } \lambda_{ \bs } \, \bs \, . 
\end{equation}
We need to show that for a $\Phi$-tree $G$, there exists $\lambda \in \RR^{E(G)}$ with $\bv_G(\blambda) \in \ZZ^n$ if and only if $G$ is not a (signed or unsigned) tree with an odd number of vertices. 
We proceed by cases.

\medskip

\noindent (i) \textbf{Trees}: Let $G = ([n],E)$ be a tree. If 
\begin{equation} \label{eq:lambdaA}
 \bv_G(\blambda) \ := \ \tfrac 1 2 \bone + \sum_{ ij \in E(G) } \lambda_{ ij } \left( \be_i - \be_j \right)
\end{equation}
is a lattice point for some choice of scalars $\blambda =( \lambda_{ ij })_{ij \in E}$,
then the sum of the coordinates of  $\bv_G(\lambda)$---which ought to be an integer---equals $\tfrac 1 2 n$. Therefore $n$ is even.

Conversely, suppose $n$ is even. For each edge $e=ij$ of $G$, let
\[
\lambda_{ij} = \begin{cases} 
0 & \text{if $G-e$ consists of two subgraphs with an even number of vertices each, and} \\
\frac12 & \text{if $G-e$ consists of two subgraphs with an odd number of vertices each}.
\end{cases}
\]
We claim that $\bv_G(\blambda)$, as defined in \eqref{eq:lambdaA}, is an integer vector. To see this, consider any vertex $1 \leq m \leq n$ and suppose that when we remove $m$ and its adjacent edges, we are left with subtrees with vertex sets $V_1, \ldots, V_k$. Then
\[
 \bv_G(\blambda)_m \ \equiv \ \tfrac12 + \tfrac12(\text{number of $1 \leq i \leq k$ such that $|V_i|$ is odd})  \quad (\text{mod } 1),
\]
and this is an integer since $\sum_{i=1}^k |V_i| = n-1$ is odd. 

We conclude that for a tree $G$, the affine subspace $\tfrac 1 2 \bone + \thespan(\Phi_G)$ contains lattice points if and only if $G$ has an even number of vertices, as desired.

%

\medskip

\noindent (ii) \textbf{Signed trees}: Given a subset $S \subseteq B_n = \{\pm \be_i \pm \be_j \, : \, 1 \leq i < j \leq n\} \cup \{\pm \be_i \, : \, 1 \leq i \leq n\}$, we define the \Def{vertex switching} $S_m$ of $S$ at a vertex $1 \leq m \leq n$ to be obtained by changing the sign of each occurrence of $\be_m$ in an element of $S$. Notice that the effect of this transformation on the expression
\[
\tfrac 1 2 \bone + \sum_{ \bs \in S } \lambda_{ \bs } \, \bs 
\]
is simply to change the $m$th coordinate from $\tfrac 12 + a$ to $\tfrac 12 - a$; this does not affect integrality.

Similarly, define the \Def{edge switching} $S_{\bb}$ of $S$ at $\bb \in S$ to be obtained by changing the sign of $\bb$ in $S$. Notice that 
\[
\tfrac 1 2 \bone + \sum_{ \bs \in S } \lambda_{ \bs } \, \bs \ = \
\tfrac 1 2 \bone + \sum_{ \bs \in S_{\bb} } \lambda'_{ \bs } \, \bs 
\]
where $\lambda'$ is obtained from $\lambda$ by switching the sign of $\lambda_{\bs}$.

We conclude that vertex and edge switching a subset $S \subseteq B_n$ does not affect
whether $\tfrac12 \bone + \thespan(S)$ intersects the lattice $\ZZ^n$. Now, it is
known~\cite{zaslavskysignedgraphs} that for any balanced signed graph $G$ there is an
ordinary graph $H$ such that $\Phi_G$ can be obtained from $\Phi_H$ by vertex and edge
switching. In particular---as can also be checked directly---any signed tree $G$ can be turned into an unsigned tree $H$ in this way. Invoking case (i) for the tree $H$, we conclude that for a signed tree $G$, $\tfrac 1 2 \bone + \thespan(\Phi_G)$ contains lattice points if and only if $G$ has an even number of vertices.

\medskip

\noindent (iii) \textbf{Signed halfedge-trees}: Let $G$ be a signed halfedge tree. We need to show that $\tfrac 12 \bone + \thespan(\Phi_G)$ contains a lattice point.
Let $h$ be the halfedge. There are two cases:

\smallskip

a. If $n$ is even, we can label the edges $\bs$ of $G^- := G-h$ with scalars $\lambda_{\bs}$ such that
$\bv_{G^-}(\lambda|_{G^-}) \in \ZZ^{n}$, in view of (ii).
Setting the weight of the halfedge $\lambda_{h} = 0$ we obtain $\bv_{G}(\lambda|_{G}) = \bv_{G^-}(\lambda|_{G^-}) \in \ZZ^n$, as desired.

\smallskip

b. If $n$ is odd, let $G^+$ be the signed tree obtained by turning the halfedge $h$ into a full edge $h^+$, going to a new vertex $n+1$. 
Using (ii), we can label the edges $\bs$ of $G+$ with scalars $\lambda_{\bs}$ such that $\bv_{G^+}(\lambda|_{G^+}) \in \ZZ^{n+1}$.
Setting the weight of the halfedge $h$ in $G$ to be $\lambda_{h} = \lambda_{h^+}$, we
obtain that $\bv_{G}(\lambda|_{G})$ is obtained from $\bv_{G^+}(\lambda|_{G^+})$ by dropping the last coordinate; therefore $\bv_{G}(\lambda|_{G}) \in \ZZ^{n}$ as desired.

\medskip

\noindent (iv) \textbf{Signed pseudotrees}: Let $G$ be a signed pseudotree. We need to find scalars $\lambda_{\bs}$ such that
$ \bv_G(\blambda)$ is a lattice vector.
 Assume, without loss of generality, that its unique (unbalanced) cycle $C$ is formed by the vertices $1, \ldots, m$ in that order. Let $T_1, \ldots, T_k$ be the subtrees of $G$ hanging from cycle $C$; say $T_i$ is rooted at the vertex $a_i$, where $1 \leq a_i \leq m$, and let $\bs_i$ be the edge of $T_i$ connected to $a_i$. We find the scalars $\lambda_s$ in three steps.

\smallskip

1. Thanks to (ii), for each tree $T_i$ with an even number of vertices, we can label its edges $\bs$ with scalars $\lambda_{\bs}$ such that 
\[
\bv_{T_i}(\lambda|_{T_i})  \in \ZZ^{V_i}.
\]

\smallskip

2. For each tree $T_i$ with an odd number of vertices, we can label the edges $\bs$ of $T_i - \bs_i$ with scalars $\lambda_{\bs}$ such that
$\bv_{T_i-\bs_i}(\lambda|_{T_i-\bs_i}) = \tfrac 1 2 \bone_{V_i-a_i} + \sum_{ \bs \in E(T_i)-\bs_i
} \lambda_{ \bs } \, \bs \in \ZZ^{V_i-a_i}$. Setting $\lambda_{\bs_i}=0$, we obtain
\[
\bv_{T_i}(\lambda|_{T_i})  \,  \in \,  (\tfrac12 \be_{a_i} + \ZZ^{V_i}).
\]

\smallskip

3. It remains to choose the scalars $\lambda_{12}, \ldots, \lambda_{m1}$ corresponding to the edges of the cycle $C$. Since $E(G)$ is the disjoint union of $E(C)$ and the $E(T_i)$s, we have
\[
\bv_G(\lambda) = 
\bv_C(\lambda|_C) + \sum_{i=1}^k \bv_{T_i}(\lambda|_{T_i}) + \bu \, ,
\qquad \text{where} \qquad
\bu = \tfrac12\Big(\bone - \bone_{[m]} - \sum_{i=1}^k \bone_{V_i}\Big) \in \RR^m
\]
is supported on the vertices $[m]=\{1, \ldots, m\}$ of the cycle $C$.
Therefore, $\bv_G(\lambda) \in \ZZ^n$ if and only if  we have $\bv_C(\lambda|_C) + \bt \in \ZZ^m$, where
$\bt: = \bu + \tfrac12 \sum_{i \, : \, |V_i| {\text{ even}}} \be_{a_i}$. We rewrite this condition as
\begin{equation}\label{eq:cycle}
\lambda_{12}(e_1-\sigma_1e_2) + 
\lambda_{23}(e_2-\sigma_2e_3) + 
\cdots + 
\lambda_{m1}(e_m-\sigma_me_1)  
+ \bt 
\in \ZZ^m,
\end{equation}
where $\sigma_i$ is the sign of edge connecting $i$ and $i+1$ in $C$; this is equivalent to the following system of equations modulo 1:
\begin{equation}\label{eq:cyclemod1}
\lambda_{12} \equiv  \lambda_{m1}\sigma_m - t_1 , \quad 
\lambda_{23} \equiv  \lambda_{12}\sigma_1 - t_2, \quad 
\dots, \quad 
\lambda_{m1} \equiv \lambda_{m-1,m}\sigma_{m-1} - t_m \qquad (\text{mod } 1).
\end{equation}
Solving for $\lambda_{12}$ gives $\lambda_{12} \equiv \sigma_1\cdots\sigma_m \lambda_{12}+a$ for a scalar $a$. Since the cycle $C$ is unbalanced, $\sigma_1\cdots\sigma_m =-1$, so this equation has the solution $\lambda_{12} \equiv a/2 \,\, (\text{mod }1)$\footnote{In fact it has exactly two solutions $\lambda_{12} \equiv a/2 \,\, (\text{mod }1)$ and  $\lambda_{12} \equiv (1+a)/2 \,\, (\text{mod }1)$, explaining why we have $\vol(\Phi_G) = 2$ in this case.}.
Using \eqref{eq:cyclemod1}, we can then successively compute the values of $\lambda_{23}, \ldots, \lambda_{m1}$, guaranteeing that \eqref{eq:cycle} holds. In turn, this produces a lattice point $\bv_G(\lambda) \in \frac 1 2 \bone + \thespan(\Phi_G)$, as desired.
\end{proof}

\begin{theorem} \label{th:Ehrhart}
Let $\cF(\Phi)$ be the set of $\Phi$-forests, and $\cE(\Phi) \subseteq \cF(\Phi)$ be the set of $\Phi$-forests such that every (signed) tree component has an even number of vertices. 
\begin{enumerate}
\item
The Ehrhart polynomials of the \textbf{integral} Coxeter permutahedra $\Pi^\ZZ(\Phi)$ are 
\[
\ehr_{ \Pi^\ZZ({\Phi})} (t) \ = 
\sum_{G \in \cF(\Phi)} 2^{ \pc(G) +\lc(G)} t^{ n - \tc(G) }.
\]
\item
For $\Phi \in \{A_n : \, n  \geq 2\text{ even}\} \cup \{B_n : \, n \geq 1\}$, the Ehrhart quasipolynomials of the \textbf{standard} Coxeter permutahedra $\Pi(\Phi)$ are 
\[
\ehr_{ \Pi(\Phi)} (t) \ = \ 
 \begin{cases}
\displaystyle\sum_{G \in \cF(\Phi)} 2^{ \pc(G)} t^{ n - \tc(G) }  & \textrm{ if $t$
is even,} \\
\displaystyle \sum_{G \in \cE(\Phi)} 2^{ \pc(G)} t^{ n - \tc(G) }  & \textrm{ if $t$
is odd.}
\end{cases}
\]
For $\Phi \in  \{A_{n-1} : \, n\geq 1 \text{ odd}\} \cup \{C_n : \, n \geq 1\} \cup \{D_n : \, n \geq 1\}$, we have $\ehr_{ \Pi(\Phi)} (t) = \ehr_{ \Pi^\ZZ(\Phi)}(t)$.
\end{enumerate}
\end{theorem}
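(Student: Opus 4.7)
The proof plan is to derive each part as a direct consequence of the zonotope machinery (Propositions \ref{prop:Zzonotope} and \ref{prop:zonotope}), the structure of $\Phi$-forests (Proposition \ref{prop:arithmeticmatroid}), and the lattice-plane criterion (Proposition \ref{prop:latticeplanes}).

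For part (1), I would observe that $\Pi^\ZZ(\Phi) = \sum_{\alpha\in\Phi^+}[0,\alpha]$ is an integral zonotope, namely $\Zono(\Phi^+)$, so Stanley's Proposition \ref{prop:Zzonotope} applies and
\[
\ehr_{\Pi^\ZZ(\Phi)}(t) \ = \sum_{\substack{\bW\subseteq\Phi^+\\ \text{lin.~indep.}}} \vol(\bW)\, t^{|\bW|}.
\]
Proposition \ref{prop:arithmeticmatroid} identifies the linearly independent $\bW$ with the sets $\Phi_G$ for $\Phi$-forests $G$, and supplies $|\Phi_G|=n-\tc(G)$ and $\vol(\Phi_G)=2^{\pc(G)+\lc(G)}$. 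Substituting yields the claimed formula.

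For part (2), I would first handle the integral case $\Phi\in\{A_{n-1}:n \text{ odd}\}\cup\{C_n\}\cup\{D_n\}$: here $\Pi(\Phi)$ and $\Pi^\ZZ(\Phi)$ are integer translates of each other (by $-\rho$, which in this case lies in $\ZZ^n$), so their Ehrhart polynomials coincide, giving $\ehr_{\Pi(\Phi)}(t)=\ehr_{\Pi^\ZZ(\Phi)}(t)$. In the non-integral case $\Phi\in\{A_n:n\text{ even}\}\cup\{B_n\}$, the polytope $\Pi(\Phi)$ is, up to an integer translation, equal to $\tfrac12\bone+\Zono(\Phi^+)$. Now apply Proposition \ref{prop:zonotope}:
\[
\ehr_{\Pi(\Phi)}(t) \ = \sum_{G\in\cF(\Phi)} \chi_{\Phi_G}(t)\,\vol(\Phi_G)\, t^{n-\tc(G)},
\]
where $\chi_{\Phi_G}(t)=1$ iff $\tfrac{t}{2}\bone+\thespan(\Phi_G)$ meets $\ZZ^n$. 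A key observation is that in both of these families, the root system contains no vectors $2\be_j$, so no $\Phi$-forest has any loop component, i.e.~$\lc(G)=0$, and Proposition \ref{prop:arithmeticmatroid} gives $\vol(\Phi_G)=2^{\pc(G)}$.

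It then remains to split on the parity of $t$. If $t$ is even, $\tfrac{t}{2}\bone\in\ZZ^n$, so $\chi_{\Phi_G}(t)=1$ for every $\Phi$-forest $G$, and the sum ranges over all of $\cF(\Phi)$. If $t$ is odd, write $\tfrac{t}{2}\bone = \tfrac12\bone + \tfrac{t-1}{2}\bone$ where the second summand is in $\ZZ^n$; the condition $\chi_{\Phi_G}(t)=1$ is then equivalent to $\tfrac12\bone+\thespan(\Phi_G)$ containing a lattice point, which by Proposition \ref{prop:latticeplanes} holds iff every tree component of $G$ has an even number of vertices, i.e.~$G\in\cE(\Phi)$. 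Substituting yields the two cases in the quasipolynomial formula.

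The argument is essentially a bookkeeping exercise once the preceding propositions are in hand; the only subtle point is making sure that the vanishing of $\lc(G)$ in the non-integral families is used to get the cleaner exponent $2^{\pc(G)}$ (as opposed to $2^{\pc(G)+\lc(G)}$), and correctly translating the parity of $t$ into the distinction between summing over $\cF(\Phi)$ versus $\cE(\Phi)$ via Proposition \ref{prop:latticeplanes}.
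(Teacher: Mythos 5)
Your proposal is correct and follows essentially the same route as the paper, whose proof is a one-line citation of Propositions \ref{prop:zonotope}, \ref{prop:arithmeticmatroid}, and \ref{prop:latticeplanes} together with the observation that type $A$ and $B$ forests have no loop components; you have simply written out the bookkeeping (the reduction of $\Pi(\Phi)$ to an integer translate of $\tfrac12\bone+\Zono(\Phi^+)$, and the parity analysis of $\chi_{\Phi_G}(t)$) that the paper leaves implicit.
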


\begin{proof}
This is the result of applying Proposition \ref{prop:zonotope} to these zonotopes, taking
into account Propositions~\ref{prop:arithmeticmatroid} and~\ref{prop:latticeplanes}, and the fact that $\Phi$-forests of type $A$ and $B$ contain no loop components.
\end{proof}


\section{Explicit formulas: the generating functions}

In this section, we compute the generating functions for the Ehrhart (quasi)polynomials of the Coxeter permutahedra of the classical root systems. We will express them in terms of the \Def{Lambert $\mathbf{W}$ function}
\[
W(x) \ = \ \sum_{n \geq 1} (-n)^{n-1} \frac{x^n}{n!} \, .
\]
As a function of a complex variable $x$, this is the principal branch of the inverse function of $xe^x$. It satisfies
\[
W(x) \, e^{W(x)} \ = \ x \, .
\]
Combinatorially, $-W(-x)$ is the exponential generating function for $r_n=n^{n-1}$, the
number of rooted trees $(T,r)$ on $[n]$, where $T$ is a tree on $[n]$ and $r$ is a special
vertex called the \Def{root}~\cite[Proposition 5.3.2]{stanleyec2}.

To compute the generating functions of the Ehrhart (quasi)polynomials that interest us, we first need some enumerative results on trees.


\subsection{Tree enumeration}

\begin{proposition} \label{prop:trees}
The enumeration of (signed) trees, (signed) pseudotrees, signed halfedge-trees, and signed loop-trees is given by the following formulas.
\begin{enumerate}
\item
The number of trees on $[n]$ is $t_n = n^{n-2}$. The exponential generating function for this sequence is
\[
T(x) \ := \ \sum_{n \geq 1} n^{n-2} \frac{x^n}{n!} \ = \ -W(-x) - \frac12 W(-x)^2.
\]
\item
The number of pseudotrees on $[n]$ is $p_n$, where
\[
P(x) \ := \ \sum_{n \geq 1} p_n \frac{x^n}{n!} \ = \ \frac12 W(-x) - \frac14 W(-x)^2 -\frac12
\log(1+W(-x)) \, .
\]

\item
The number of signed trees on $[n]$ is $st_n = 2^{n-1}n^{n-2}$. The exponential generating function for this sequence is
\[
ST(x) \ := \ \sum_{n \geq 1} 2^{n-1}n^{n-2} \frac{x^n}{n!} \ = \ -\frac12W(-2x) - \frac14 W(-2x)^2.
\]
\item
The number of signed pseudotrees on $[n]$ is $sp_n$, where
\[
SP(x) \ := \ \sum_{n \geq 1} sp_n \frac{x^n}{n!} \ = \ \frac14 W(-2x)  - \log(1+W(-2x)) \, . \]
\item
The number of signed half-edge trees on $[n]$ and of signed loop-trees is $sh_n = sl_n = (2n)^{n-1}$. The exponential generating function for this sequence is
\[
SH(x) \ = \ SL(x) \ := \ \sum_{n \geq 1} (2n)^{n-1} \frac{x^n}{n!} \ = \ -\frac12W(-2x) \, .
\]
\end{enumerate}
\end{proposition}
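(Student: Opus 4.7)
My plan is to reduce every case to Cayley's formula together with the defining identity $W(y)e^{W(y)} = y$ of the Lambert function. The cornerstone is that $R(x) := -W(-x)$ satisfies $R = xe^R$, which is the defining functional equation for the EGF of rooted trees (a rooted tree being a root attached to a set of rooted subtrees). Hence $R(x) = \sum_{n\geq 1} n^{n-1}x^n/n!$, recovering $r_n = n^{n-1}$ rooted trees on $[n]$. From $R$, each of the remaining enumerations arises by a standard species-theoretic decomposition.

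For (1), I use $R(x) = xT'(x)$ (a rooted tree is a tree with a marked vertex), so $T'(x) = R(x)/x$. Implicit differentiation of $R = xe^R$ gives $R' = R/(x(1-R))$, and one checks that the candidate $R - R^2/2$ has derivative $R'(1-R) = R/x$ and vanishes at $0$. Hence $T(x) = R - R^2/2 = -W(-x) - \tfrac12 W(-x)^2$, and Cayley's $t_n = n^{n-2}$ follows from $r_n = n t_n$.

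For (2), I decompose a pseudotree uniquely as a cycle of length $k \geq 3$ with a rooted tree attached at each cycle vertex. The species-theoretic EGF of a dihedrally symmetric cyclic arrangement of $k$ copies of a structure with EGF $F$ is $F^k/(2k)$, so
\[
P(x) = \sum_{k\geq 3}\frac{R(x)^k}{2k} = \tfrac12\Big({-\log(1-R) - R - \tfrac12 R^2}\Big),
\]
and substituting $R = -W(-x)$ (hence $1-R = 1+W(-x)$) yields the stated formula. Parts (3) and (5) are direct: each of the $n-1$ edges of a signed tree carries an independent sign, giving $st_n = 2^{n-1}n^{n-2}$; placing an unsigned halfedge (respectively, forced-negative loop) at any of the $n$ vertices yields $sh_n = sl_n = n \cdot st_n = (2n)^{n-1}$. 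The EGFs then follow from the scalings $ST(x) = \tfrac12 T(2x)$ and $SH(x) = SL(x) = \tfrac12 R(2x) = -\tfrac12 W(-2x)$.

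I expect part (4) to be the main obstacle. A signed pseudotree decomposes as a cycle of length $k$ carrying a rooted signed tree at each cycle vertex, with the $k$ cycle edges carrying an \emph{unbalanced} sign pattern (odd number of negatives, giving $2^{k-1}$ patterns). Because the paper admits parallel edges of opposite signs, the cycle length is allowed to be $k = 2$ (a digon), and one must verify carefully that the dihedral-quotient factor $1/(2k)$ correctly enumerates this case as a single configuration. Writing $SR(x) = -\tfrac12 W(-2x)$ for the EGF of rooted signed trees (note $n \cdot st_n = (2n)^{n-1}$ coincides with $sh_n$), one obtains
\[
SP(x) = \sum_{k\geq 2}\frac{2^{k-1}}{2k}SR(x)^k = \tfrac14 \sum_{k\geq 2}\frac{(2\,SR(x))^k}{k},
\]
and summing this logarithmic series with $2\,SR(x) = -W(-2x)$ produces the closed form of the statement. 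Once the digon bookkeeping is pinned down, everything else is routine series manipulation grounded in $R(2x) = -W(-2x)$ and the identity from (1).
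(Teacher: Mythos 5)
Your proposal is correct, and in two places it takes a genuinely different route from the paper. For part (1) the paper argues bijectively: $\tfrac12 W(-x)^2$ is the exponential generating function for ordered pairs of rooted trees, i.e.\ for trees with a marked edge, giving $\tfrac12 W(-x)^2 = -W(-x) - T(x)$ directly; your calculus derivation via $R = xT'$, implicit differentiation of $R = xe^R$, and antidifferentiation of $R'(1-R)$ is an equally valid substitute. Parts (2), (3), and (5) match the paper's arguments essentially verbatim. For part (4) the paper sidesteps your digon worry by splitting the count: signed pseudotrees whose cycle has length at least $3$ are counted as $2^{n-1}p_n$ (a unicyclic graph on $[n]$ has $n$ edges, and exactly half of the $2^n$ sign patterns are unbalanced on the cycle), and the digon-containing ones are counted separately as $st_n(n-1)/2$ (double one of the $n-1$ edges of a signed tree with the opposite sign, dividing by $2$ for the overcount). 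Your uniform sum $\sum_{k\geq 2}\tfrac{2^{k-1}}{2k}SR(x)^k$ is also correct, including at $k=2$: there the $2^{k-1}=2$ unbalanced patterns collapse to the single unbalanced digon, while the number of undirected cyclic arrangements is $1$ rather than $(k-1)!/2 = \tfrac12$, and these two discrepancies cancel, so the term $\tfrac12 SR^2$ is right (it yields $sp_2=1$ and $sp_3=16$, matching a direct count).

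One substantive caveat: your sum evaluates to $\tfrac14 W(-2x) - \tfrac14\log\left(1+W(-2x)\right)$, not to the displayed formula $\tfrac14 W(-2x) - \log\left(1+W(-2x)\right)$, so it does not literally ``produce the closed form of the statement'' as you assert. The discrepancy is a typo in the proposition rather than an error on your part: the printed formula would give $sp_1 = \tfrac32$, whereas the version with coefficient $\tfrac14$ on the logarithm gives $sp_1=0$, $sp_2=1$, $sp_3=16$, agrees with the paper's own recursion $sp_n = 2^{n-1}p_n + st_n(n-1)/2$, and is the one consistent with the factor $1/\sqrt{1+W(-2tx)}$ appearing in Theorem \ref{th:egfZono}. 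You should state explicitly that your derivation corrects the displayed constant instead of claiming agreement with the statement as printed.
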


\begin{proof} We begin by remarking that most of these formulas were obtained by Vladeta Jovovic and posted without proof in entries 
A000272, 
A057500,
A097629,
A320064, and
A052746
of the Online Encyclopedia of Integer Sequences \cite{sloaneonlineseq}. For completeness, we provide proofs.

\smallskip

1. The formula for $t_n$ is well known and due to Cayley; see for example \cite[Proposition 5.3.2]{stanleyec2}. Now, by the multiplicative formula for exponential generating functions \cite[Proposition 5.1.1]{stanleyec2}, $W(-x)^2/2$ is the generating function for pairs of rooted trees $(T_1, r_1)$ and $(T_2,r_2)$, the disjoint union of whose vertex sets is $[n]$. By adding an edge between $r_1$ and $r_2$, we see that this is equivalent to having a single tree with a special chosen edge $r_1r_2$; there are $n^{n-2}(n-1)$ such objects. Therefore
\[
\frac12W(-x)^2 \ = \ \sum_{n \geq 0} n^{n-2}(n-1) \frac{x^n}{n!} \ = \ -W(-x) -T(x) \, ,
\]
proving the desired generating function.

\smallskip

2. A pseudotree on $[n]$ is equivalent to a choice of rooted trees $(T_1, r_1), \ldots, (T_k, r_k)$, the union of whose vertex sets is $[n]$, together with a choice of an undirected cyclic order on $r_1, \ldots, r_n$ --- or equivalently, an undirected cyclic order on those trees. Since the exponential function for rooted trees and for undirected cyclic orders are $-W(-x)$ and 
\[
x + \frac{x^2}2 + \sum_{n \geq 3} \frac{(n-1)!}2 \frac{x^n}{n!} \ = \ \frac{x}2 + \frac{x^2}4
+ \frac12 \log(1-x) \, ,
\]
respectively, the desired result follows by the compositional formula for exponential generating functions.

 \smallskip

3. There are $2^{n-1}$ choices of signs for a tree on $[n]$, so we have $st_n = 2^{n-1}t_n$.
Combining with 1.\ gives the desired formulas.

\smallskip

4. Each pseudotree on $[n]$ can be given $2^n$ different edge sign patterns, half of which will lead to an unbalanced cycle; this leads to $2^{n-1}p_n$ signed pseudotrees. This accounts for all signed pseudotrees, except for the
ones containing a $2$-cycle. We obtain such an object by starting with a signed tree,
choosing one of its edges, and inserting the same edge with the opposite sign. This counts
each such object twice, so the total number of them is $st_n(n-1)/2$. It follows that $sp_n =
2^{n-1}p_n + st_n(n-1)/2$, from which the desired formulas follow using 2.\ and 3. 

\smallskip

5. A signed half-edge tree (or a signed loop-tree) is obtained from a signed tree by choosing the vertex where we will attach the half-edge (or loop). Thus $sh_n = sl_n = n \cdot st_n = (2n)^{n-1}$. The exponential generating function follows directly from the definition of $W(x)$.  
\end{proof}


\subsection{Generating functions of Ehrhart (quasi)polynomials of Coxeter permutahedra}

\begin{theorem} \label{th:egfZono}
The generating functions for the Ehrhart polynomials of the \textbf{integral} Coxeter permutahedra of the classical root systems are:
\begin{eqnarray*}
\sum_{n \geq 0} \ehr_{ \Pi^\ZZ(A_{n-1})} (t) \frac{x^n}{n!} &=&
\exp \left(-\frac1t W(-tx) - \frac1{2t} W(-tx)^2\right) , \\
\sum_{n \geq 0} \ehr_{ \Pi^\ZZ(B_n)} (t) \frac{x^n}{n!} &=&
\left.  \exp\left(- \frac1{2t} \, W(-2tx) - \frac1{4t} W(-2tx)^2\right) \middle/
\sqrt{1+W(-2tx)} \right. , \\
\sum_{n \geq 0} \ehr_{ \Pi^\ZZ(C_n)} (t) \frac{x^n}{n!} &=&
\left.  \exp\left(\frac{-t-1}{2t} \, W(-2tx) - \frac1{4t} W(-2tx)^2\right) \middle/
\sqrt{1+W(-2tx)} \right. , \\
\sum_{n \geq 0} \ehr_{ \Pi^\ZZ(D_n)} (t) \frac{x^n}{n!} &=&
\left.  \exp\left(\frac{t-1}{2t} \, W(-2tx) - \frac1{4t} W(-2tx)^2\right) \middle/
\sqrt{1+W(-2tx)} \right. .
\end{eqnarray*}
\end{theorem}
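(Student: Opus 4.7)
The plan is to apply the exponential formula for exponential generating functions to the combinatorial expression for $\ehr_{\Pi^\ZZ(\Phi)}(t)$ given by Theorem \ref{th:Ehrhart}(1), using the enumerative data of Proposition \ref{prop:trees}. A $\Phi$-forest $G$ decomposes uniquely as a disjoint union of connected $\Phi$-tree components, each of which (by the definitions in Section 4.1) is a tree, signed tree, signed halfedge-tree, signed loop-tree, or signed pseudotree, subject to the restrictions imposed by $\Phi$. Since $\tc(G)$, $\pc(G)$, $\lc(G)$ and $n$ are all additive under disjoint union, the weight $2^{\pc(G) + \lc(G)} t^{n-\tc(G)}$ is multiplicative over components. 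The exponential formula therefore gives
\[
\sum_{n \geq 0} \ehr_{\Pi^\ZZ(\Phi)}(t) \frac{x^n}{n!} \ = \ \exp\bigl(C_\Phi(x,t)\bigr),
\]
where $C_\Phi(x,t)$ is the sum of per-component weighted EGFs: a tree or signed tree of size $n$ contributes weight $t^{n-1}$ (giving $t^{-1} T(tx)$ or $t^{-1} ST(tx)$), a signed pseudotree contributes $2 t^n$ (giving $2\, SP(tx)$), a signed halfedge-tree contributes $t^n$ (giving $SH(tx)$), and a signed loop-tree contributes $2t^n$ (giving $2\, SL(tx)$).

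Next I would case-split on the root system. For type $A_{n-1}$, only tree components occur, so $C_A = t^{-1} T(tx)$, and the claim follows immediately from the formula for $T(x)$. For type $D_n$ we have $C_D = t^{-1} ST(tx) + 2\, SP(tx)$; for type $B_n$ we further add $SH(tx)$, and for type $C_n$ we further add $2\, SL(tx)$ instead. Substituting the closed forms from Proposition \ref{prop:trees} and using the identity $SH(x) = SL(x) = -\tfrac12 W(-2x)$, the computation is direct: the quadratic term $-\tfrac{1}{4t} W(-2tx)^2$ is inherited from $ST(tx)$ alone; the logarithmic contribution from $2\, SP(tx)$ exponentiates to the common factor $(1+W(-2tx))^{-1/2}$; and the linear-in-$W(-2tx)$ contributions from the various components combine to produce the coefficients $-\tfrac{1}{2t}$, $-\tfrac{t+1}{2t}$, and $\tfrac{t-1}{2t}$ in types $B$, $C$, $D$ respectively, as the contributions $-\tfrac12$ from one $SH$, $-1$ from $2\, SL$, or $0$ combine with the $+\tfrac12$ coming from the linear part of $2\, SP$.

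The main thing to watch is the bookkeeping of the coefficient of $\log(1+W(-2tx))$ contributed by $SP(tx)$, which must come out to $-\tfrac12$ in $C_\Phi$ so that exponentiation produces the correct square-root factor in the denominator. A sanity check against a small direct enumeration, such as $\ehr_{\Pi^\ZZ(B_2)}(t) = 1 + 4t + 7t^2$ (obtained by listing the $B_2$-forests on two vertices), pins down the normalization and confirms the final formulas.
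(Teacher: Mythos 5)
Your proposal is correct and follows essentially the same route as the paper: Theorem \ref{th:Ehrhart}.1 combined with the compositional/exponential formula, with exactly the same per-type component sums $\frac1t T(tx)$, $\ 2SP(tx)+\frac1t ST(tx)+SH(tx)$, $\ 2SP(tx)+\frac1t ST(tx)+2SL(tx)$, and $2SP(tx)+\frac1t ST(tx)$, followed by substitution of Proposition \ref{prop:trees}. Your caution about the $\log(1+W(-2tx))$ bookkeeping is well placed: the logarithmic part of $SP(x)$ must be $-\tfrac14\log(1+W(-2x))$ for $2SP(tx)$ to exponentiate to $(1+W(-2tx))^{-1/2}$, exactly as your sanity check against $\ehr_{\Pi^\ZZ(B_2)}(t)=1+4t+7t^2$ confirms.
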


\begin{proof}
Theorem \ref{th:Ehrhart}.1 tells us that these exponential generating functions can be understood as enumerating various families of (pseudo)forests, weighted by their various types of connected components. The compositional formula for exponential generating functions \cite[Theorem 5.1.4]{stanleyec2} then expresses them in terms of the exponential generating functions for each type of connected component.

For example, in type $A$ there are only tree components, so 

\begin{eqnarray*}
\sum_{n \geq 0} \ehr_{ \Pi^\ZZ(A_{n-1})} (t) \frac{x^n}{n!} 
&=& \sum_{n \geq 0} \, \sum_{\substack{\textrm{ forests} \\ G \text{ on } [n]}}t^{ n - \tc(G) } \frac{x^n}{n!} \\ 
&=& \sum_{n \geq 0} \, \sum_{\substack{\textrm{ forests} \\ G \text{ on } [n]}}\left(\frac1t\right)^{\tc(G)} \frac{(tx)^n}{n!} \\ 
&=& \exp \left(\frac1t \sum_{n \geq 0} \, \sum_{\substack{\textrm{ trees} \\ T \text{ on } [n]}} \frac{(tx)^n}{n!}  \right)\\ 
&=& \exp \left(\frac1t T(tx) \right) \\
&=& \exp \left(-\frac1t W(-tx) - \frac1{2t} W(-tx)^2\right)  
\end{eqnarray*}
by Proposition \ref{prop:trees}.1.

Similarly, for the other types we have

\begin{eqnarray*}
\sum_{n \geq 0} \ehr_{ \Pi^\ZZ(B_n)} (t) \frac{x^n}{n!} 
&=& \sum_{n \geq 0} \, \sum_{\substack{B-\textrm{forests} \\ G \text{ on } [n]}} 2^{\pc(G)} t^{ n - \tc(G) } \frac{x^n}{n!} \\ 
&=& \sum_{n \geq 0} \, \sum_{\substack{B-\textrm{forests} \\ G \text{ on } [n]}} 2^{\pc(G)} \left(\frac1t\right)^{\tc(G)} 1^{\hc(G)} \frac{(tx)^n}{n!} \\
&=& \exp \left(2SP(tx) + \frac1t ST(tx) + SH(tx)\right) 
\end{eqnarray*}

and, analogously,

\begin{eqnarray*}
\sum_{n \geq 0} \ehr_{ \Pi^\ZZ(C_n)} (t) \frac{x^n}{n!} 
&=& \exp \left(2SP(tx) + \frac1t ST(tx) + 2SL(tx)\right),  \\
\sum_{n \geq 0} \ehr_{ \Pi^\ZZ(D_n)} (t) \frac{x^n}{n!} 
&=& \exp \left(2SP(tx) + \frac1t ST(tx) \right).
\end{eqnarray*}
Carefully substituting the formulas in Proposition \ref{prop:trees}, we obtain the desired results.
\end{proof}

\medskip

Using the formulas in Theorem \ref{th:egfZono} and suitable mathematical software, one easily computes the following table of Ehrhart polynomials. The reader may find it instructive to compare this with the analogous table in \cite[Section 6]{ardilacastillohenley}, which lists the Ehrhart polynomials with respect to the weight lattice of each root system.
The tables coincide only in type $C$, which is the only classical type where the weight lattice is $\ZZ^n$.

\begin{table}[h]
\begin{center}
\noindent \begin{tabular}{|c|l|} 
\hline
$\Phi$ &  Ehrhart polynomial of $\Pi^\ZZ(\Phi^+)$ \\ \hline
$A_1$	& $1$	\\
$A_2$	& $1+t$	\\
$A_3$	& $1+3 t+3 t^2$\\
$A_4$	& $1+6 t+15 t^2+16 t^3$\\
\hline
$B_1$	& $1+t$	\\
$B_2$	&$1+4 t+7 t^2$\\
$B_3$	&$1+9 t+39 t^2+87 t^3$\\
$B_4$	&$1+16 t+126 t^2+608 t^3+1553 t^4$\\
\hline
$C_1$	&$1+2t$	\\
$C_2$	&$1+6 t+14 t^2$\\
$C_3$	&$1+12 t+66 t^2+172 t^3$\\
$C_4$	&$1+20 t+192 t^2+1080 t^3+3036 t^4$\\
\hline
$D_2$	&$1+2t+2 t^2$\\
$D_3$	&$1+6 t+18 t^2+32 t^3$\\
$D_4$	&$1+12 t+72 t^2+280 t^3+636 t^4$\\
\hline
\end{tabular}
\end{center}
\caption{\label{table:polys}Ehrhart polynomials of integral Coxeter permutahedra.}
\end{table}

\begin{theorem} \label{th:egfCZono}
The generating function for the odd part of the Ehrhart quasipolynomials of the non-integral \textbf{standard} Coxeter permutahedra are the following. For $t$ odd,
\begin{eqnarray*}
\sum_{n \geq 0} \ehr_{ \Pi(A_{2n-1})} (t) \frac{x^{2n}}{(2n)!} 
&=&
\exp \left(-\frac{W(-tx)+W(tx)}{2t} - \frac{W(-tx)^2+W(tx)^2}{4t}\right)  \\
\sum_{n \geq 0} \ehr_{ \Pi(B_n)} (t) \frac{x^n}{n!} &=&
\left.  \exp\left(- \frac{W(-2tx) + W(2tx)}{4t} - \frac{W(-2tx)^2+W(2tx)^2}{8t}\right)
\middle/ \sqrt{1+W(-2tx)} \right. .
\end{eqnarray*}
\end{theorem}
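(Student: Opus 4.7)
The plan is to adapt the compositional-formula argument used in the proof of Theorem \ref{th:egfZono}, now incorporating the restriction imposed by Theorem \ref{th:Ehrhart}(2) for odd $t$: every (signed) tree component of the contributing $\Phi$-forest must have an even number of vertices, while halfedge-tree and pseudotree components carry over unchanged with the same weights. The key observation is that for any EGF $F(x) = \sum_n f_n x^n/n!$, the EGF enumerating the same structures but restricted to vertex sets of even size is the even part $F_{\mathrm{even}}(x) := \tfrac{1}{2}(F(x)+F(-x))$. Applied to $T$ and $ST$ from Proposition \ref{prop:trees}, this gives
\[
T_{\mathrm{even}}(x) = -\tfrac{W(-x)+W(x)}{2} - \tfrac{W(-x)^2+W(x)^2}{4}, \qquad ST_{\mathrm{even}}(x) = -\tfrac{W(-2x)+W(2x)}{4} - \tfrac{W(-2x)^2+W(2x)^2}{8},
\]
which already match the exponents appearing in the theorem statement.

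For type $A$, applying the compositional formula with $T_{\mathrm{even}}$ in place of $T$ (the only change from the derivation of Theorem \ref{th:egfZono}(1)) directly gives the first identity, since odd-$m$ terms on the right vanish automatically: any forest with all tree components of even order must have an even total number of vertices, so only the $x^{2n}/(2n)!$ coefficients survive. For type $B$, the analogous compositional argument, treating tree, halfedge-tree, and pseudotree components with their respective weights $1/t$, $1$, and $2$ (there are no loop components in type $B$), gives
\[
\sum_{n \geq 0} \ehr_{\Pi(B_n)}(t)\tfrac{x^n}{n!} \ = \ \exp\!\Bigl(\tfrac{1}{t}\, ST_{\mathrm{even}}(tx) + SH(tx) + 2\, SP(tx)\Bigr),
\]
where the factor $\exp(SH(tx) + 2\,SP(tx))$ simplifies to $1/\sqrt{1+W(-2tx)}$ exactly as in the proof of Theorem \ref{th:egfZono}(2). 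Substituting the closed form of $ST_{\mathrm{even}}$ yields the second identity.

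The only genuinely new ingredient, and the point I expect to be mildly subtle, is the claim that restricting a connected species to an even vertex count commutes with the exponential-formula assembly into forest-like structures; this is what lets one replace $T$ by $T_{\mathrm{even}}$ (and likewise $ST$ by $ST_{\mathrm{even}}$) inside the exponential without touching the contributions from the other species. Once this species-level identity is verified, the remainder is a routine substitution paralleling the proof of Theorem \ref{th:egfZono}.
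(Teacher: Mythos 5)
Your proposal is correct and follows essentially the same route as the paper: both replace $T$ and $ST$ by their even parts $\tfrac12(F(x)+F(-x))$ inside the exponential/compositional formula and then substitute the closed forms from Proposition \ref{prop:trees}. (Your use of $SH(tx)$ with coefficient $1$ for the type-$B$ halfedge components is the consistent choice; the paper's proof writes $2SL(tx)$ at that spot, evidently a slip, since it is $2SP(tx)+SH(tx)$ that produces the factor $1/\sqrt{1+W(-2tx)}$ appearing in the stated formula.)
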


\begin{proof}
We carry out similar computations as for Theorem \ref{th:egfZono}. This requires us to observe that the generating functions for even trees and even signed trees are
\begin{eqnarray*}
T_{\text{even}}(x) &:=& \sum_{n \geq 0} t_{2n}\frac{x^{2n}}{n!} \ = \ \frac12(T(x) + T(-x)),\\
ST_{\text{even}}(x) &:=& \sum_{n \geq 0} st_{2n}\frac{x^{2n}}{n!} \ = \ \frac12(ST(x) +
ST(-x)) \, .
\end{eqnarray*}
Now, in light of Theorem \ref{th:Ehrhart}.2, and analogously to the proof of Theorem \ref{th:egfZono}, we have
\begin{eqnarray*}
\sum_{n \geq 0} \ehr_{ \Pi(A_{2n-1})} (t) \frac{x^{2n}}{(2n)!} 
&=& \exp \left(\frac1tT_{\text{even}}(tx)\right) \\
&=& \exp \left(\frac1{2t}T(tx) + \frac1{2t}T(-tx)\right)
\end{eqnarray*}
and 
\begin{eqnarray*}
\sum_{n \geq 0} \ehr_{ \Pi(B_n)} (t) \frac{x^n}{n!} 
&=& \exp \left(2SP(tx) + \frac1t ST_{\text{even}}(tx) +  2SL(tx)\right) \\
&=& \exp \left(2SP(tx) + \frac1{2t} ST(tx) + \frac1{2t} ST(-tx) + 2SL(tx)\right),  
\end{eqnarray*}
which give the desired results using Proposition~\ref{prop:trees}.
\end{proof}

Using these formulas, and combining them with Table \ref{table:polys}, one computes the following table of Ehrhart quasipolynomials. 

\begin{table}[h]
\begin{center}
\noindent \begin{tabular}{|c|l|}
\hline
$\Phi$ & Ehrhart quasipolynomial of $\Pi(\Phi^+)$ \\ \hline
$A_2$	& 
$\begin{cases}
1+t & \text{for $t$ even} \\
t & \text{for $t$ odd} \\
\end{cases}
$
\\
$A_4$	& 
$\begin{cases}
1+6t+15t^2+16t^3 & \text{for $t$ even} \\
3t^2+16 t^3 & \text{for $t$ odd} \\
\end{cases}
$
\\
\hline
$B_1$	&
$
\begin{cases}
1+t & \text{for $t$ even} \\
t & \text{for $t$ odd} \\
\end{cases}
$
\\
$B_2$	&
$
\begin{cases}
1+4t + 7t^2 & \text{for $t$ even} \\
2 t+7 t^2 & \text{for $t$ odd} \\
\end{cases}
$
\\
$B_3$	&
$
\begin{cases}
1+9t + 39t^2 + 87t^3 & \text{for $t$ even} \\
6 t^2+87 t^3 & \text{for $t$ odd} \\
\end{cases}
$\\
$B_4$	&
$
\begin{cases}
1+16t + 126t^2 + 608t^3 + 1553t^4 & \text{for $t$ even} \\
12 t^2+212 t^3+1553 t^4 & \text{for $t$ odd} \\
\end{cases}
$\\
\hline
\end{tabular}
\end{center}
\caption{\label{table:quasipolys}Ehrhart quasipolynomials of the non-integral standard Coxeter permutahedra.}
\end{table}

The reader may find it instructive to count the lattice points in the polygons of Figure \ref{root permutahedra}, and compare those numbers with the predictions given by Tables \ref{table:polys} and~\ref{table:quasipolys}.


\section{Acknowledgments}

Some of the results of this paper are part of the Master's theses of JM at San Francisco State University, under the supervision of FA and MB \cite{McWhirter}.
We would like to thank Mariel Supina and Andr\'es Vindas--Mel\'endez for valuable discussions, and Jean-Philippe Labb\'e for checking our computations of the Ehrhart (quasi)polynomials of Tables \ref{table:polys} and \ref{table:quasipolys}.
This paper was written while FA was on sabbatical at the Universidad  de Los Andes in Bogot\'a. He thanks Los Andes for their hospitality and SFSU and the Simons Foundation for their financial support.


\newpage

\small

\bibliographystyle{amsplain}
\bibliography{bib}

\setlength{\parskip}{0cm} 

\end{document}